\def\@tocline#1#2#3#4#5#6#7{\relax
  \ifnum #1>\c@tocdepth % then omit
  \else
    \par \addpenalty\@secpenalty\addvspace{#2}%
    \begingroup \hyphenpenalty\@M
    \@ifempty{#4}{%
      \@tempdima\csname r@tocindent\number#1\endcsname\relax
    }{%
      \@tempdima#4\relax
    }%
    \parindent\z@ \leftskip#3\relax \advance\leftskip\@tempdima\relax
    \rightskip\@pnumwidth plus4em \parfillskip-\@pnumwidth
    #5\leavevmode\hskip-\@tempdima
      \ifcase #1
       \or\or \hskip 1em \or \hskip 2em \else \hskip 3em \fi%
      #6\nobreak\relax
    \hfill\hbox to\@pnumwidth{\@tocpagenum{#7}}\par% <---- \dotfill -> \hfill
    \nobreak
    \endgroup
  \fi}
\theoremstyle{definition}
\newtheorem{Def}{Definition}[section]
\newtheorem{Rem}[Def]{Remark}
\newtheorem{Ex}[Def]{Example}
\newtheorem{Prop}[Def]{Proposition}
\newtheorem{Thm}[Def]{Theorem}
\newtheorem{Lem}[Def]{Lemma}
\newtheorem{Cor}[Def]{Corollary}
\newtheorem{Fact}[Def]{Fact}
\newcommand{\ilim}[1][]{\mathop{\varinjlim}\limits_{#1}}
\newcommand{\R}{\mathbb{R}}
\newcommand{\Z}{\mathbb{Z}}
\newcommand{\Q}{\mathbb{Q}}
\newcommand{\ima}{{\rm Im}}
\newcommand{\fr}{{\rm Frac}}
\newcommand{\scrH}{\mathscr{H}}
\newcommand{\wt}{\widetilde}
\newcommand{\ol}{\overline}
\newcommand{\A}{\mathscr{A}}
\newcommand{\M}{\mathscr{M}}
\newcommand{\B}{\mathscr{B}}
\begin{document}
\normalfont
\title{On Berkovich double residue fields and birational models}
\author{Keita Goto}
\address{
Department of Mathematics, National Taiwan University,
Rm 547, 5F Astronomy Mathematics Building,
 No. 1, Section 4, Roosevelt Rd, Da’an District, Taipei City 10617, 
Taiwan
}
\email{k.goto.math@gmail.com}
\keywords{Berkovich analytic space, quasi monomial valuation,
 double residue field, birational model} 
\subjclass[2020]{14B99, 14E99, 14G22}
%\date{\today}
\begin{abstract}

 Just as a residue field can be considered for a point of an algebraic variety, we can also consider a residue field for a point of a Berkovich analytic space. 
This residue field is a valuation field in the algebraic sense.
Then we can consider its residue field  as a valuation field.
  We call it the Berkovich double residue field at the point.

In this paper,  we consider  a point $x$ of the Berkovich analytification of an algebraic variety and identify the Berkovich double residue field at $x$ with the union of the residue fields at the center of $x$ in birational models.
Besides, we concretely compute the Berkovich double residue field for any quasi monomial valuation.

\end{abstract}
\maketitle
\tableofcontents

\newpage

\section{Introduction}

In this paper,  non-Archimedean fields are allowed to be trivially valued.
For a point $x$ of a Berkovich analytic space, we denote by $\scrH(x)$ the
complete valuation field given as the residue field at $x$ in the sense of Berkovich, and call it
the \emph{completed residue field}.
More precisely,
denote by $\A$  a Banach ring, by $\M(\A)$  the Berkovich spectrum of $\A$,
and by $|\cdot|_x: \A \to \R_{\geq 0}$  the  multiplicative seminorm
corresponding to $x\in \M(\A)$.
Then,
for each $x\in \M(\A)$, the completed residue field
$\scrH(x)$ is obtained as the completion of the fraction field of
$\A/ \mathfrak{p}_x$ with respect to the valuation on $\A/ \mathfrak{p}_x$ induced by the multiplicative seminorm $x$,
where $\mathfrak{p}_x:=\{f\in \A \ |\ |f|_x=0\}$.
In particular, there exists a canonical homomorphism $\A \to \scrH(x)$.
See the end of \S 2.1 for more detail.
Further, we denote by $\wt{\scrH(x)}$
the residue field of the valuation field $\scrH(x)$, which
 is first introduced in \cite{Berk90}, and
 call it the \emph{Berkovich double residue field} in accordance with \cite{Mat-text}.

Let $X$ be a variety over $k$.
Here
$k$ is a non-Archimedean field,  and a \emph{variety} over $k$ means a
separated integral scheme of finite type over $k$.
Our goal is to give an algebraic description of the Berkovich double residue fields at points on
$X^{\rm an}$, where $X^{\rm an}$ means the \emph{Berkovich analytification} of $X$.
It is well-known that
there exists a canonical map $$\pi_X: X^{\rm an}\to X$$
defined as follows:
For any  $x\in X^{\rm an}$, we can take 
a corresponding map of the form $|\cdot |_x: A \to \R_{\geq 0}$ for some open affine subset ${\rm Spec} A$  of $X$.
Here, $|\cdot |_x$ is a multiplicative seminorm.
Then we obtain a canonical homomorphism $A \to \scrH(x)$ in the same way as above.
It induces a canonical morphism $\psi_x: {\rm Spec}\scrH(x)\to X$.
Because ${\rm Spec}\scrH(x)$ is a singleton, the image of $\psi_x$ is a singleton contained in $X$.
Then  $\pi_X(x)$ is defined as the point of $\psi_x ({\rm Spec}\scrH(x))$.

\begin{Def}[$=$ Definition \ref{model}]
    In the above setting, a \emph{model} $\mathcal{X}$ of $X$ is a separated flat integral $k^\circ$-scheme
    of finite type equipped with a datum of an isomorphism $\mathcal{X}\times_{{\rm Spec}k^\circ}{\rm Spec}k\cong X$,
    where $k^\circ$ is the valuation ring of $k$.
\end{Def}

By definition, we obtain a natural morphism $X\to \mathcal{X}$ for any model $\mathcal{X}$ of $X$.
In addition,
 we obtain a morphism ${\rm Spec}\scrH(x)\to X \to \mathcal{X}$ by composition with the  canonical map $\psi_x: {\rm Spec}\scrH(x)\to X$.
This morphism gives the following diagram.
\[
  \xymatrix{
    {\rm Spec} \scrH(x) \ar[r] \ar[d] & \mathcal{X} \ar[d]^{} \\
    {\rm Spec} \scrH(x)^\circ \ar[r]_{} \ar@{..>}[ru]_{} & {\rm Spec}k^\circ
  }
\]

This dotted arrow making the diagram commutative does not always exist. 
When it exists,  the \emph{center} of $x$ (in $\mathcal{X}$) is defined as
its image of the unique closed point of ${\rm Spec}\scrH(x)^\circ$, and denoted as $c_\mathcal{X}(x)$.
By the valuative criterion of separatedness
\cite{H}, such $c_\mathcal{X}(x)$ is uniquely determined if it exists.

Define $X^{\rm val}:=\pi_X^{-1}(\xi_X),$ where $\xi_X$ is the generic point of $X$.
Now consider a proper birational morphism $f: Y\to X$ such that, for some open set $U\subset X$, $f^{-1}(U)=U$ and
the restriction morphism $f: f^{-1}(U)\to U$ becomes the identity map $\mathrm{id}_U$.
%In particular, $Y$ is also a variety over $k$.
Assume that there is a model $\mathcal{X}$ (resp. $\mathcal{Y}$) of $X$ (resp. $Y$) such that
there is a proper birational morphism $\tilde{f}: \mathcal{Y}\to \mathcal{X}$
 whose restriction over ${\rm Spec} k$ is the given birational morphism $f:Y\to X$.
Then
we can regard any $x\in X^{\rm val}$ as an element of $Y^{\rm val}(:=\pi_Y^{-1}(\xi_Y))$ via the birational map $f^{-1}: X \dashrightarrow Y$.
Indeed,  $x\in X^{\rm val}=\pi_X^{-1}(\xi_X)$ and $\xi_X\in U$ imply that $x\in U^{\rm an}$.
By assumption, $f^{-1}|_U:  U\to f^{-1}(U)$ is the identity map.
 Then $(f^{-1}|_U)^{\rm an}:  U^{\rm an}\cong (f^{-1}(U))^{\rm an} $ gives
an element $y:=(f^{-1}|_U)^{\rm an}(x)$ of $Y^{\rm an}$. More precisely, $y\in Y^{\rm val}$ holds.
This is how we identify $y\in Y^{\rm val}$ with $x\in X^{\rm val}$.
For convenience, denote $x \in Y^{\rm val}$ via the identification.
Here,
 $\tilde{f}(c_\mathcal{Y}(x))=c_\mathcal{X}(x)$ follows.
This gives the canonical injection
$\kappa(c_\mathcal{X}(x))\hookrightarrow \kappa(c_\mathcal{Y}(x))$.
As we see later (\S 2 and \S 3),
there is a canonical injection $\kappa(c_\mathcal{X}(x)) \hookrightarrow \wt{\scrH(x)}$.
It
factors through the injection $\kappa(c_\mathcal{X}(x))\hookrightarrow \kappa(c_\mathcal{Y}(x))$.
%as follows:
That is,
 $\kappa(c_\mathcal{X}(x))\hookrightarrow \kappa(c_\mathcal{Y}(x)) \hookrightarrow \wt{\scrH(x)}$ holds.
Here, note that $\wt{\scrH(x)}$ does not depend on the ambient space $X$ in the sense that $f^{\rm an}$ induces an isomorphism $\wt{\scrH(x)}\cong \wt{\scrH(y)}$
 (cf.
Proposition \ref{nb}).

In understanding $X^{\rm an}$, it is often useful to consider models of $X$.
Such an idea first appeared in \cite{Ray72}.
In the paper, Raynaud proved that, if $k^\circ$ is a complete discrete valuation ring, then 
 any formal scheme $\mathfrak{X}$ locally of finite type over $k^\circ$
induces a rigid analytic space  $\mathfrak{X}^{\rm rig}$ by what is called Raynaud generic fiber.
Moreover,
 Raynaud also proved that
Raynaud generic fiber gives 
an equivalence of categories between a category of flat formal schemes of finite type over $k^\circ$
up to admissible blow-ups and a category of rigid analytic spaces over $k$ such that they are quasi-compact and quasi-separated.
Note that admissible blow-ups of $\mathfrak{X}$ take important roles in considering a $G$-topology of  $\mathfrak{X}^{\rm rig}$.
Let $\varpi$ be a uniformizing parameter of $k^\circ$.
Since any model of $X$ induces such a formal scheme by $\varpi$-adic completion,
any  model of $X$ induces a rigid analytic space by Raynaud generic fiber.
Moreover, it follows from \cite[Proposition 3.3.1]{Berk90} that any  model of $X$ induces a Berkovich analytic space.
%Then, the Berkovich analytic space induced by the model of $X$ is a union of $k$-affinoid spaces in $X^{\rm an}$.
Considering the equivalence of the categories,
it is natural to use models of $X$ to understand properties of $X^{\rm an}$.
In paricular, it is expected that local properties of $X^{\rm an}$ can be expressed by admissible blow-ups.
Actually, Theorem \ref{first}, discussed later, supports this expectation.

For any variety $X$ over a non-Archimedean field $k$ and any $x\in X^{\rm val}$, we define a directed set
$B(X,x)$  as follows: 
%In general, for a Grothendieck universe $\mathscr{U}$, 
%the $\mathscr{U}$-smallness means being an element of $\mathscr{U}$.
First of all, we take a Grothendieck universe $\mathcal{U}$ such that $X\in \mathcal{U}$.
%and define $\textbf{Sch}$ to be the category of $\mathcal{U}$-small schemes.
Here, we admit \emph{the axiom of universes} through this paper. In particular, we assume that there is a 
Grothendieck universe $\mathcal{V}$ such that $\mathcal{U}\in \mathcal{V}$.
This assumption allows us to justify taking limits and colimits that run over any directed subset of  $\mathcal{U}$ in $\mathcal{V}$.
Let $B(X,x)$ be a $\mathcal{V}$-small category satisfying the following condition $(*)$, where the $\mathcal{V}$-smallness means being an element of $\mathcal{V}$, which means $B(X,x) \in \mathcal{V}$.
  \[
  (*):=
  \begin{cases}
  \underline{\text{If } k=k^\circ,} \\
 \mathrm{Ob}(B(X,x)):= \{ \text{all }
   k\text{-varieties}\ \mathcal{X} \text{ equipped with a datum of a }
   \\
   \text{proper birational morphism } f_\mathcal{X} :\mathcal{X}\to X 
   \text{ such that }  c_{\mathcal{X}}(x) 
   \text{  exists } \\
   \text{in }
   \mathcal{X}
   \text{ and, for some non-empty open }
   \text{subscheme }U\subset X, %\text{ such that }
   f_\mathcal{X}^{-1}(U)
   \\=U  \text{ and } 
   f_\mathcal{X}|_{f_\mathcal{X}^{-1}(U)}=\mathrm{id}_U 
   \text{ hold}
   \} \cap \mathcal{U}, \\
     \text{and for any } \mathcal{X} \text{ and } \mathcal{Y}\in \mathrm{Ob}(B(X,x)),\\
  \mathrm{Hom}(\mathcal{Y},\mathcal{X}):=\{\text{proper birational morphisms } f: \mathcal{Y}\to \mathcal{X} \text{ over } \\
  X
  \text{ such that } f|_U=\mathrm{id}_U \text{ holds for some non-empty open}
  \\ \text{subscheme }U\subset X
  \}. 
  \\
      \underline{\text{If } k\neq k^\circ,} \\
    \mathrm{Ob}(B(X,x)):= \{
    \text{all proper models}\ \mathcal{X}\ \text{of } X \text{ over } k^\circ
    %\text{  in which } c_{\mathcal{X}}(x) \text{  exists}
    \}\cap \mathcal{U}, \\
  \text{and for any } \mathcal{X} \text{ and } \mathcal{Y}\in \mathrm{Ob}(B(X,x)),\\
  \mathrm{Hom}(\mathcal{Y},\mathcal{X}):=\{\text{proper birational morphisms } f:\mathcal{Y}\to \mathcal{X}\\
  \text{such that } f|_X=\mathrm{id}_X \text{ holds}\}. 
  \end{cases}
  \]
%If $k\neq k^\circ$, the class $ \mathrm{Ob}(\tilde{B}(X,x))$ may be a proper class or an empty set.
%If $ \mathrm{Ob}(\tilde{B}(X,x))= \emptyset$, then we define $B(X,x)$ as the category $\tilde{B}(X,x))=\emptyset$.
%Otherwise, we fix a universe $\mathscr{U}$ in the sense of Grothendieck intersecting with $ \mathrm{Ob}(\tilde{B}(X,x))$. 
%Then we define $\mathrm{Ob}(B(X,x))$ as a complete system of representatives of isomorphic classes of $\mathrm{Ob}(\tilde{B}(X,x))$ contained in $\mathscr{U}$.
%In particular, we define $B(X,x)$ as a full subcategory of $\tilde{B}(X,x)$.
Then
$B(X,x)$ becomes a directed $\mathcal{V}$-small set. See \S 3 for the detail.
Under this setting,
we obtain the following result which is our first main result.

\begin{Thm}[$=$ Theorem \ref{union}]\label{first}
  Let $X$ be a variety over a non-Archimedean field $k$.
  For any $x\in X^{\rm val}$, we define the directed set $B(X,x)$ as above.
  If $B(X,x)\neq \emptyset$, then it follows that
  \[\wt {\scrH(x)} =\bigcup_{\mathcal{X}\in B(X,x)} \kappa(c_{\mathcal{X}}(x)) \cong \ilim[\mathcal{X}\in B(X,x)] \kappa(c_{\mathcal{X}}(x)),\]
  where $\bigcup_{\mathcal{X}\in B(X,x)} \kappa(c_{\mathcal{X}}(x))$ means
  the union of all $ \kappa(c_\mathcal{X}(x))$'s as subfields of $ \wt{\scrH(x)}$
  under the canonical injections $\kappa(c_\mathcal{X}(x)) \hookrightarrow \wt{\scrH(x)}$.
\end{Thm}

It asserts that  $\wt{\scrH(x)}$ can be regarded as the union of the residue fields of the center of $x$
in birational models.
To construct suitable birational models   is a  very difficult central problem in birational algebraic geometry or arithmetic geometry: for instance, log resolution, semistable reduction, minimal model, canonical model, Iitaka fibration, Mori fibration among others.
Morally speaking, one main feature of the Berkovich double residue field is that it is defined intrinsically in terms of purely non-archimedean world, without relying on good model construction, while it captures important information on birational models, as we show here.

\vspace{0.1in}
From now on, we focus on `quasi monomial valuation,'
which is a basic class of valuations.
Note that, in this paper, we adopt two definitions that are slightly different from each other.
(See Definitions \ref{triv qmo}, \ref{qmo} and Remark \ref{dif qmo}.)

 %We note that they are defined only if the base field is a trivially valued field.

The following is our second main result.

\begin{Thm}[$=$ Theorem \ref{smon}]\label{fsmon}
  Let $X$ be a variety over a trivially valued field $k$.
  For any quasi monomial valuation
   $x\in X^{\rm val}$,
the Berkovich double residue field $\widetilde{\mathscr{H}(x)}$ is finitely generated
   over $\kappa(c_X(x))$ as a field.
   Further there exists some blow-up
   $\pi : X'\to X$ such that
   \[\wt {\scrH(x)} \cong \kappa(c_{X'}(x)).\]
   Here, `quasi monomial valuation' is in the sense of Definition \ref{triv qmo}.
\end{Thm}
In addition, we give a concrete description as part of proving the theorem.

Our third main result (=Theorem \ref{quot}) states what happens to a Berkovich double residue field
when taking the quotient by a finite group $G$.

We also prove an analogous result to the last assertion in Theorem \ref{fsmon},
over a complete discrete valuation field (CDVF for short) as follows.

\begin{Thm}[$=$ Theorem \ref{quasi}]\label{fourth}
  Let $K$ be a CDVF, $R$ be the valuation ring of $K$, and
  $k$ be the residue field of $K$.
  Assume that the characteristic of $k$ is 0.
  Let $X$ be a smooth connected projective $K$-analytic space.

If $x$ is a quasi monomial valuation, then there exists an SNC model $\mathcal{X}$ of $X$ such that
 $\wt {\scrH(x)} \cong \kappa(c_{\mathcal{X}}(x))$.
 Here, `quasi monomial valuation' is in the sense of Definition \ref{qmo}.
\end{Thm}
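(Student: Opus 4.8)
The plan is to reduce to the monomial case already handled for trivially-valued fields (Theorem \ref{smon}), using the theory of quasi-monomial (= Abhyankar) valuations and the existence of suitable SNC models over a complete DVF in characteristic zero. First I would recall that a quasi-monomial valuation $x$ on $X$ is, by definition, monomial with respect to some SNC model $\mathcal{X}_0$ of $X$ and a choice of regular system of parameters adapted to the components of the special fiber through $c_{\mathcal{X}_0}(x)$; this is the analogue over $R$ of a monomial valuation on a nonsingular point. Thus there exists at least one SNC model $\mathcal{X}_0$ in which $c_{\mathcal{X}_0}(x) =: \xi_0$ exists, and near $\xi_0$ the valuation $x$ is determined by weights $(a_1,\dots,a_r)$ on parameters $(z_1,\dots,z_r)$ cutting out the components $E_1,\dots,E_r$ of $\mathcal{X}_{0,s}$ passing through $\xi_0$, together with a monomial/divisorial behavior in the ``horizontal'' directions. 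The residue field $\kappa(\xi_0)$ is a finitely generated extension of $k$, and $\wt{\scrH(x)}$ contains it by the canonical inclusion from the introduction.

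Next I would analyze $\wt{\scrH(x)}$ explicitly in coordinates around $\xi_0$, exactly as in the proof of Theorem \ref{smon}: the residue field of the monomial valuation is obtained from $\kappa(\xi_0)$ by adjoining the residues of monomials $z_1^{m_1}\cdots z_r^{m_r}$ that have value zero, i.e. those with $\sum m_i a_i = 0$. This produces a finitely generated extension $\wt{\scrH(x)} = \kappa(\xi_0)(\bar t_1,\dots,\bar t_s)$ where the $t_j$ are finitely many such ``balanced'' monomials generating the relevant lattice of value-zero exponents. Now, the key geometric step: each generator $\bar t_j$ should be realized as a coordinate on the center of $x$ in a further blow-up. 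Concretely, performing a sequence of admissible blow-ups of $\mathcal{X}_0$ along (strata of) the SNC divisor — toroidal/toric modifications subdividing the cone spanned by $E_1,\dots,E_r$ so that the ray of $x$ becomes a ray of the subdivided fan — produces a new model $\mathcal{X}$ in which $c_{\mathcal{X}}(x)$ is the generic point of a stratum whose residue field is precisely $\kappa(\xi_0)$ extended by the monomials that became units along that stratum. One checks that the subdivision can be chosen (since it is determined by finite combinatorial data, namely the integer vector of weights) so that $\kappa(c_{\mathcal{X}}(x))$ captures all the generators $\bar t_j$, hence equals $\wt{\scrH(x)}$. Finally I would verify that this model $\mathcal{X}$ is again SNC: toroidal blow-ups of an SNC model along smooth centers supported on the divisor preserve the SNC property, using $\mathrm{char}\,k = 0$ to ensure no wild phenomena and that the toric charts behave well.

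The main obstacle I expect is the passage from the local toric/combinatorial picture to an honest SNC model of the projective $X$ over $R$: one must globalize the fan subdivision to a projective birational modification of $\mathcal{X}_0$ (which requires a compatible choice of polyhedral subdivision, available in char $0$ via e.g. toroidal resolution or equivariant resolution of singularities) and simultaneously ensure that the horizontal directions — where $x$ may behave divisorially rather than purely monomially — are handled by an additional blow-up along a smooth subvariety of a special fiber component, again preserving SNC. A secondary technical point is checking that after all these modifications $c_{\mathcal{X}}(x)$ remains a point of $\mathcal{X}$ (i.e. the center does not ``escape to the boundary''), which follows from properness of the blow-ups together with the valuative criterion, as recorded in the birational-invariance discussion preceding Theorem \ref{union}. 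Once these are in place, the equality $\wt{\scrH(x)} = \kappa(c_{\mathcal{X}}(x))$ is immediate from the explicit description, and in particular $x \in X^*$, completing the proof.
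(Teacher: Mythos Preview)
Your proposal is correct in outline but follows a genuinely different route from the paper. The paper does not use toroidal geometry or fan subdivision at all: after invoking Theorem \ref{smon} to write $\wt{\scrH(x)} = \kappa(c_{\mathcal{X}}(x))(f_1,\dots,f_n)$ with $f_i = \overline{g_i/h_i}$ and $g_i,h_i \in \mathfrak{m}_{c_{\mathcal{X}}(x)}$, it simply blows up along the vertical ideals $(\varpi^l,g_i,h_i)$ (the power $\varpi^l$ being chosen so that $|\varpi^l|_x \le |h_i|_x$, forcing the blow-up center into the special fibre), iterating over $i$ exactly as in the proof of Theorem \ref{union}. These blow-ups need not preserve the SNC condition, so the paper appeals to Temkin's desingularization (Theorem \ref{Tem}) to pass to a further vertical blow-up that is SNC; since any further blow-up can only enlarge $\kappa(c_{\mathcal{X}'}(x))$ and it already equals the maximal field $\wt{\scrH(x)}$, the equality persists. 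Your toroidal approach trades this appeal to Temkin for the combinatorics of finding a fan refinement in which the minimal cone containing the weight vector has dimension equal to the rational rank, together with globalizing that refinement to a projective modification; this is cleaner conceptually (stratum blow-ups preserve SNC automatically) but requires machinery not set up in the paper. Two minor corrections to your sketch: your concern about ``horizontal directions'' is vacuous here, since by the paper's definition a quasi-monomial valuation is centered at the \emph{generic point} $\xi_J$ of a stratum $E_J$, so the regular parameters at $\xi_J$ are exactly the vertical equations $z_j$; and the phrase ``so that the ray of $x$ becomes a ray of the subdivided fan'' is only literally accurate when the weight vector is rational---in general what you need is that the minimal cone of the refinement containing it has dimension equal to the rational rank.
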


This paper is organized as follows:
In $\S 2$, we recall terminology and facts about Berkovich analytic spaces
 and centers of multiplicative (semi)norms. 
From $\S 3$, we start to state our original results.
In $\S 3$, we state general properties of the Berkovich double residue field. In
particular, we prove Theorem \ref{first} asserting that the Berkovich double residue field can be written as  the direct limit of residue fields at the centers of birational models, and analyze when we need only one birational model for the expression.
In $\S 4$, we study quasi monomial valuations. In particular, we  prove Theorem \ref{fsmon}.
One notable idea behind the proof is to apply results in \S 3 to
  construct a suitable birational model.
In $\S 5$, we study $\wt{\scrH(x)}$ for $x\in X^{\rm val}$ whose center  in $X$ is a quotient singularity.
By considering group actions, we extend the previous result to quotient singularities. In particular,
we prove Theorem \ref{quot}.
In $\S 6$, we study the case when the base field is a CDVF. In particular, we
prove Theorem \ref{fourth}, as an application of
 our discussion in $\S4$.

 \begin{center}
   { \large{Acknowledgements}}
 \end{center}
 This paper is a revised version of my master thesis.
 I would like to thank Associate Professor Yuji Odaka, who is my 
 advisor, for a lot of suggestive advice and productive discussions.
And I would like to thank  Professor Mattias Jonsson and  Dr. Ryota Mikami for their helpful comments.
I am also grateful to an anonymous referee for his patience with my revision.
 This work is supported by JSPS KAKENHI Grant Number JP20J23401 and NSTC of Taiwan, with grant number 112-2123-M-002-005.

\section{Preliminaries}

This section is mainly based on \cite{Berk90}.
\subsection{Berkovich Spectra}
\

Let $A$ be a commutative ring with identity 1.
\begin{Def}\label{Def1}
  \
A \emph{seminorm} on $A$ is a function $|\cdot|:A\to \R_{\geq 0}$
possessing the following properties:

\begin{enumerate}
  \item $|0|=0$,
  \item $|1|\leq 1$,
  \item $|f-g|\leq |f|+|g|,$
  \item $|fg|\leq|f||g|,$

\end{enumerate}
for all $f,g\in A$.
Furthermore, a seminorm $|\cdot|$ on $A$ is called
\begin{itemize}
  \item
a \emph{norm}
if an equality $|f|=0$ implies $f=0$.
\item
\emph{non-Archimedean}
if $|f-g|\leq \max\{|f|,|g|\}$ \ for all $f,g\in A$.
\item
\emph{multiplicative}
if  $|1|=1$ and $|fg|=|f||g|$ \ for all $f,g \in A$.
\end{itemize}
Let $x$ be a multiplicative seminorm on $A$ which is also denoted by $|\cdot|$ as a function.
Then define $\mathfrak{p}_x:=\{f\in A\ |\ |f|=0\}$ which is a prime ideal of $A$.
\end{Def}
%For any multiplicative seminorm $|\cdot|$ on $A$,
%when the seminorm $|\cdot|$ is written as $x$, $\mathfrak{p}_x:=\{f\in A\ |\ |f|=0\}$ is a prime ideal of $A$.

For each seminorm $|\cdot|$ on $A$, $|\cdot|$ is a \emph{norm} on $A$
if and only if the induced topology is Hausdorff.
To emphasize that $|\cdot|$ is a norm, we often denote by $||\cdot ||$ the norm $|\cdot|$.
Two norms $||\cdot ||$ and $ ||\cdot ||'$ on $A$ are called \emph{equivalent} if there exist $d_1, d_2>0$ such that $d_1 ||f ||\leq ||f ||' \leq d_2 ||f ||$ holds for any $f\in A$. 
A pair $(A,||\cdot ||)$ consisting of a commutative unital ring $A$ and a norm $||\cdot||$ on $A$
is called a \emph{normed ring}.

\begin{Def}

A \emph{Banach ring} $\A=(\A,||\cdot||)$ is a normed ring $\A$
that is complete with respect to its norm $||\cdot||$.
\end{Def}

\begin{Ex}
  We can regard any commutative unital ring $A$ as a Banach ring by equipping it with the \emph{trivial norm} $|\cdot|_0$
  defined as below.

For each $f\in A$,
\[
  |f|_0 := \begin{cases}
    1 & ({\rm if} \ f\neq 0) \\
    0 & ({\rm if} \ f=0)
  \end{cases}
\]
  The trivial norm is  non-Archimedean. Moreover $(A,|\cdot|_0)$ is complete.
  Hence this is a Banach ring.
In particular, when $A$ is a domain, the norm is multiplicative.
\end{Ex}

\begin{Def}
  A norm $|\cdot|$ is called a \emph{valuation} if it is multiplicative.
\end{Def}
As you can easily see from the above example, for any field $k$, the trivial norm $|\cdot|_0$ is a valuation.
Then $( k, |\cdot|_0 ) $ is called a \emph{trivially valued field}.

\begin{Ex}
  Recall the definition of DVR in the algebraic sense.
  A DVR $R$ has an (additive) discrete valuation $v :R\to \Z$.
  The DVR $R$ is called
  a \emph{complete DVR} if $R$ is a Banach ring with respect
  to the norm defined as $||\cdot ||:=e^{-v} : R \to \R_{\geq 0}$.
  The norm is multiplicative and non-Archimedean.
\end{Ex}

\begin{Def}
A Banach ring $(K,||\cdot ||)$ is called
  \begin{itemize}
    \item
    a \emph{Banach field} if $K$ is a field.
    \item
    a \emph{complete valuation field} if $(K,||\cdot ||)$ is a commutative Banach field whose norm is multiplicative.
    \item
    a \emph{non-Archimedean field} if $(K,||\cdot ||)$ is a complete valuation field whose norm is non-Archimedean.
  \end{itemize}
\end{Def}
By definition, a trivially valued field
 is also a non-Archimedean field.
Any complete valuation field is a valuation field in the algebraic sense.

For any complete valuation field $k=(k,|\cdot|)$, the \emph{value group} of $k$ is defined by
\[|k^\times| :=\{|f|\in\R \ |\ f\in k^\times (=k\setminus \{ 0\} ) \}
.
\]
Further, we set
\[
\sqrt{|k^\times| }:= \{ a\in \R_{\geq 0} \ |\ a^n \in |k^\times|
\ \
{\rm for\  some} \ \ n\in \Z_{>0}\} .
\]
Then
$|k^\times|$ is a $\Z$-module and
$\sqrt{|k^\times| }$ is a $\Q$-vector space.
In particular, it holds that $\sqrt{|k^\times| }\cong |k^\times| \otimes _\Z \Q$.

\begin{Def}
Let $(\A,||\cdot||)$ be a Banach ring.
A seminorm $|\cdot|$ on $\A$ is bounded if there exists $C>0$ such that
$|f|\leq C||f||$ for all $f\in\A$.
\end{Def}

Let $(\A,||\cdot||)$ be a Banach ring and $I$ be an ideal of $\A$.
Define the \emph{residue seminorm} on $\A/I$ as follows:
For any $f\in \A/I$,
\[ |f|:=\inf \left\{||g||\in \R_{\geq0} \ \middle| \ g\in \A,  f=g+I\in \A/I  \right\}.\]
This is a seminorm on $\A/I$.
Here, $I$ is said to be \emph{closed} if and only if
the residue seminorm is a norm on $\A/I$.
If that's the case, then $(\A/I,|\cdot|)$ becomes a Banach ring again.

Suppose that $(\A,||\cdot||_\A)$ and $(\B,||\cdot||_\B)$ are
Banach rings.
\begin{Def}
  Let $\varphi : \A\to\B$ be a ring homomorphism.
 The map $\varphi : \A\to\B$ is \emph{bounded} if there exists $C>0$ such that
  $ ||\varphi (f)||_\B \leq C||f||_\A $
for each $f\in\A$.
The map  $\varphi : \A\to\B$ is said to be \emph{admissible} if the residue seminorm of $\A/\ker \varphi$
  is equivalent to the restriction  of the norm $||\cdot||_\B$  to $\ima \varphi$
  under a canonical isomorphism $\A/\ker \varphi \cong\ima \varphi$.
\end{Def}

A bounded homomorphism is the most fundamental morphism between two Banach
rings. 
Therefore, unless otherwise noted, for two Banach rings $\A$ and $\B$, any map
$\varphi : \A\to\B$ will mean a bounded
homomorphism.
An admissible homomorphism is a bounded homomorphism that satisfies the
fundamental theorem on homomorphisms as Banach rings.

\begin{Def}[{\cite[$\S$ 1.2]{Berk90}}]
  Let $\A$ be a commutative Banach ring with identity.
  The \emph{spectrum} $\M(\A)$ is the set of all bounded multiplicative seminorms on $\A$
  provided with the weakest topology with respect to which all real valued functions on $\M(\A)$
  of the form $|\cdot|\mapsto|f|,\ f\in\A,$ are continuous.
\end{Def}
For any complete valuation field $k$, it follows from \cite[p.13]{Berk90} that $\M (k)$
%$=\{ 1{\rm pt}\}.$$
is a singleton.
A bounded homomorphism $\varphi : \A\to\B$ between Banach rings induces a continuous map
$$\varphi^\sharp :\M(\B)\to \M(\A)$$ defined by
$|f|_{\varphi^\sharp(x)}:=|\varphi (f)|_x$
for all $f\in\A$ and  $x\in \M(\B)$,
where $|\cdot |_x$ (resp. $|\cdot |_{\varphi^\sharp(x)}$) is the corresponding seminorm to $x\in \M(\B)$
(resp. $\varphi^\sharp(x)\in \M(\A)$).

\begin{Thm}[{\cite[Theorem 1.2.1]{Berk90}}]
  Let $\A$ be a non-zero commutative Banach ring with identity.
  The spectrum $\M(\A)$ is a nonempty, compact Hausdorff space.
\end{Thm}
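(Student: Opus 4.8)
The plan is to separate the three assertions into a soft topological part (compact and Hausdorff), which I would handle by a Tychonoff-type embedding, and the substantive part (nonempty), which I would handle by a non-Archimedean analogue of the Gelfand maximal-ideal argument.

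For the soft part, the first step is to record that a bounded multiplicative seminorm $|\cdot|$ on $\A$ automatically satisfies $|f|\le\|f\|$ for every $f$: from $|f|\le C\|f\|$ one gets $|f|^n=|f^n|\le C\|f^n\|\le C\|f\|^n$, and letting $n\to\infty$ gives $|f|\le\|f\|$. Hence $|\cdot|\mapsto(|f|)_{f\in\A}$ embeds $\M(\A)$ into the product $P:=\prod_{f\in\A}[0,\|f\|]$, and, directly from the definition of the topology on $\M(\A)$ as the coarsest one making every evaluation $|\cdot|\mapsto|f|$ continuous, this map is a homeomorphism onto its image with the subspace topology. Now $P$ is compact by Tychonoff's theorem and Hausdorff as a product of Hausdorff spaces, so it suffices to check that the image is closed; but the conditions singling out a multiplicative seminorm with value $1$ at the identity — namely $t_0=0$, $t_1=1$, $t_{f-g}\le t_f+t_g$ and $t_{fg}=t_f t_g$ for all $f,g\in\A$ — each cut out a closed subset of $P$ (a preimage of a closed subset of $\R$ or $\R^3$ under a continuous map assembled from finitely many coordinate projections), and $\M(\A)$ is their intersection. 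A closed subspace of a compact Hausdorff space is compact Hausdorff, which settles both assertions at once.

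The real work is nonemptiness, and here I would argue as follows, assuming $\A\ne0$. After replacing $\|\cdot\|$ by the equivalent submultiplicative norm $\|f\|_{\mathrm{op}}:=\sup_{g\ne0}\|fg\|/\|g\|$, which has $\|1\|_{\mathrm{op}}=1$ and the same bounded seminorms, I may assume $\|\cdot\|$ is submultiplicative with $\|1\|=1$. I then apply Zorn's lemma to the set $\mathcal{S}$ of bounded submultiplicative seminorms $p$ with $p\le\|\cdot\|$ and $p(1)=1$, ordered by pointwise $\le$: it contains $\|\cdot\|$, and the pointwise infimum of a chain in $\mathcal{S}$ lies again in $\mathcal{S}$ (the subadditivity and submultiplicativity of the infimum are where one uses that the chain is totally ordered, not merely directed). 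This produces a minimal element $p_0\in\mathcal{S}$, and the claim is $p_0\in\M(\A)$. First I would show $p_0$ is power-multiplicative: the quantity $\rho(x):=\inf_n p_0(x^n)^{1/n}=\lim_n p_0(x^n)^{1/n}$ is a submultiplicative, power-multiplicative seminorm with $\rho\le p_0\le\|\cdot\|$ and $\rho(1)=1$ — the only delicate point being subadditivity of $\rho$, which I would extract from the binomial expansion together with the estimate $p_0(mz)\le m\,p_0(z)$ for $m\in\Z_{>0}$ — so $\rho\in\mathcal{S}$ and minimality forces $\rho=p_0$. Then, supposing for contradiction $p_0(ab)<p_0(a)p_0(b)$ for some $a,b$ (so $p_0(a)>0$), I form the localization $q(x):=\inf_n p_0(xa^n)/p_0(a)^n=\lim_n p_0(xa^n)/p_0(a)^n$, the sequence being nonincreasing by submultiplicativity. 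One checks $q$ is a bounded submultiplicative seminorm with $q\le p_0\le\|\cdot\|$, and $q(1)=\lim_n p_0(a^n)/p_0(a)^n=1$ precisely because of power-multiplicativity of $p_0$, so $q\in\mathcal{S}$. If $q=p_0$, the defining sequence is constant, hence $p_0(xa)=p_0(x)p_0(a)$ for all $x$, and taking $x=b$ contradicts the choice of $a,b$; therefore $q\ne p_0$ while $q\le p_0$, contradicting minimality. So $p_0$ is multiplicative, and $\M(\A)\ne\emptyset$.

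The main obstacle is this nonemptiness statement, and within it the two Zorn-style reductions: squeezing a minimal bounded submultiplicative seminorm first down to a power-multiplicative one and then down to a multiplicative one. The one genuinely computational ingredient is the subadditivity of $\rho$ in the first reduction; the one piece of bookkeeping that must be watched throughout is the normalization $p(1)=1$, and it is exactly power-multiplicativity — the output of the first reduction — that guarantees this normalization survives the localization used in the second.
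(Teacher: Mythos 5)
The paper offers no proof of this statement---it is quoted directly from Theorem 1.2.1 of \cite{Berk90}---and your argument is correct and essentially the standard one given there: the embedding of $\M(\A)$ as a closed subset of the compact Hausdorff product $\prod_{f\in\A}[0,\|f\|]$ (using $|f|\le\|f\|$ via the power trick) for the topological part, and, for nonemptiness, a Zorn-minimal bounded submultiplicative seminorm forced first to be power-multiplicative by the spectral-radius seminorm $\lim_n p_0(x^n)^{1/n}$ and then multiplicative by the localization seminorm $\lim_n p_0(xa^n)/p_0(a)^n$. Your bookkeeping (the operator-norm renormalization so that $\|1\|=1$, the chain-infimum lower bound in the Zorn step, and the use of power-multiplicativity to keep $q(1)=1$) is sound, and the fact that completeness of $\A$ is never invoked is harmless, since it only strengthens the cited statement.
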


For $x\in\M(\A)$, define $\mathfrak{p}_x:=\{f\in\A\ |\ |f|_x=0\}$ in the same way as Definition \ref{Def1}.
This is a prime ideal of $\A$.
Then the multiplicative seminorm $x$ on $\A$ induces
a residue seminorm $\ol{x}$ on $\A/\mathfrak{p}_x$ that becomes a valuation.
In particular,
$|\ol{f}|_{\ol{x}}=|f|_x$ holds
for each $f\in \A$.
By abuse of language we  denote by $x$ the induced valuation $\ol{x}$.
The completion  $\scrH (x)$ of the fraction field of $\A/\mathfrak{p}_x$ with respect to
this valuation $x$ is a complete valuation field.
In particular,
there exists a canonical map $\A \to \scrH (x)$.
This $\scrH(x)$ is called the \emph{completed residue field} of $x$.
For each $x\in \M(\A)$, the \emph{rational rank} of $x$ is a number defined as
$\dim_\Q \sqrt{|\scrH(x)^\times|}$.

From now on, we define the \emph{(Berkovich) double residue field} $\wt{\scrH(x)}$.
The double residue field is first introduced in \cite{Berk90} without giving a name.
The way to call it is in accordance with
\cite{Mat-text}.

For $x\in\M(\A)$, we obtain the completed residue field $\scrH(x)$.
Then $\scrH(x)$ is a complete valuation field. Therefore,
\[\scrH(x)^{\circ}
:=\{f\in\scrH(x)\ |\ |f|_x\leq 1\}
\]
is its valuation ring and
\[\scrH(x)^{\circ \circ}
:=\{f\in\scrH(x)\ |\ |f|_x< 1\}\]
is its maximal ideal.
Hence,
\[\wt{\scrH(x)}:=\scrH(x)^{\circ}/\scrH(x)^{\circ\circ}
\]
is a field. We call
this $\wt{\scrH(x)}$ the double residue field of $x$, which is the residue field of the valuation field $\scrH(x)$.

In this paper, we compute $\wt{\scrH(x)}$ concretely
when $x$ is a `(quasi) monomial valuation'
(see Definitions \ref{m1}, \ref{m2}, \ref{triv qmo}, \ref{qmo}).

On the other hand,
$\wt{\scrH(x)}$ is computed concretely when $x$ is a point of
 the Shilov boundary of a strictly $k$-affinoid space
  (cf. \cite[Proposition 2.4.4]{Berk90})
 or $x$ is a point of the Berkovich affine line over an algebraically closed non-Archimedean field 
 (cf. \cite[\S 1.4.4]{Berk90} for points of type 1,2 and 3, and \cite[Proposition 2.3]{BR10} for any points).

For any non-Archimedean field $k$, we also define $k^\circ$, $k^{\circ\circ}$ and $\wt{k}$ in the same manner.

\subsection{Berkovich analytifications}
\

Now we review the construction of Berkovich analytification $X^{\rm an}$ for any scheme $X$
of locally finite type over a non-Archimedean field $k$ in the sense of Berkovich \cite{Berk90}.

At first, we define a Banach ring corresponding to a closed disc.
\begin{Def}\label{def afd}
Let $(k,|\cdot|)$ be a non-Archimedean field.
\begin{itemize}

\item$\A$ is a \emph{Banach $k$-algebra} if $\A$ is a Banach ring equipped with a
bounded homomorphism
$k\to \A$.
\item
Let $(\A,|\cdot|)$ be a Banach $k$-algebra and $n$ be a positive integer.
For $r_1,\dots,r_n>0$ and , define:
\begin{align*}
 %\hspace{0.5in}
 &\A\{r_1^{-1}T_1,\dots,r_n^{-1}T_n\} \\
   &:= \left\{f=\sum_{I\in\Z_{\geq 0}^n} a_I
T^I\ \middle|\ a_I\in \A, \limsup_{|I|\to \infty}|a_I|r^I\to 0 \right\},
\end{align*}
where $I=(i_1,\dots,i_n),$ \ $|I|=i_1+\cdots +i_n,$\
\ $T^I=T_1^{i_1}\cdots T_n^{i_n}$
and $r^I=r_1^{i_1}\cdots r_n^{i_n}$. This is a commutative Banach ring
with respect to the valuation $||f||:=\max_I |a_I|r^I$.
For brevity, this algebra will also be denoted by $\A\{r^{-1}T\}$.
We often consider the case when $\A=k$. In particular, $k\{r^{-1}T\}$ is called a \emph{Tate algebra}.
\item
A Banach $k$-algebra $\A$ is called a \emph{$k$-affinoid algebra} if there exists
an admissible surjection
$k\{r^{-1}T\} \twoheadrightarrow \A$
.
\end{itemize}
\end{Def}
\begin{Rem}
  By definition, $\A\{r^{-1}T\}$ has a natural admissible injection
  $\A\hookrightarrow\A\{r^{-1}T\}$. Similarly, any non-zero $k$-affinoid algebra $\A$ has a natural injection $k\hookrightarrow \A$ via the surjection $k\{r^{-1}T\} \twoheadrightarrow\A$.
  If $\A$ is $k$-affinoid, then  $\A\{r^{-1}T\}$ is also $k$-affinoid.
$E(0,r):=\M(k\{r^{-1}T\})$ is an analogue of the complex closed disc
centered at the origin with radii $r=(r_1,\dots,r_n)$.
\end{Rem}
\begin{Ex}
  Suppose that the valuation on $k$ is trivial.
  If $r_i\geq1$ for all $1\leq i \leq n$, then $k\{r^{-1}T\}$ coincides with the
  polynomial ring $k[T_1,\dots,T_n]$.
  If $r_i<1$ for all $1\leq i \leq n$, then $k\{r^{-1}T\}$ coincides with the
  ring of formal power series $k[[T_1,\dots,T_n]]$.
\end{Ex}

\begin{Def}
  $X$ is \emph{$k$-affinoid space} if
   $X=\M(\A)$
  for some $k$-affinoid algebra $\A$.
\end{Def}
\begin{Ex}
  $k\{r^{-1}T\}$ is a typical example of a $k$-affinoid algebra.
  Moreover $E(0,r)=\M(k\{r^{-1}T\})$ which we saw above
  is a typical example of a $k$-affinoid space.

\end{Ex}

We will make use of the following proposition later.

\begin{Fact}[{\cite[Proposition 2.1.3]{Berk90}}]\label{closed}
  Any $k$-affinoid algebra is noetherian and all of its ideals are closed.
\end{Fact}

\begin{Def}
  Let $\A$ be a $k$-affinoid algebra.
  Let
  $f=(f_1,\dots,f_n)$ and $g=(g_1,\dots,g_m)$ be sequences of $\A$,
  and let $p=(p_1,\dots,p_n)$ and $q=(q_1,\dots,q_m)$ be sequences of positive real numbers.
  Then $\A\{p^{-1}f,qg^{-1}\}$ is defined as follows:
\begin{align*}
&\A\{p^{-1}f,qg^{-1}\} \\
&:=\A\{p^{-1}T,qS\}/(T_1-f_1,\dots,T_n-f_n,g_1S_1-1,\dots,g_mS_m-1).
\end{align*}
\end{Def}

In general, any $k$-affinoid algebra is noetherian and all of its ideals are closed.
In particular, it implies that
any quotient of $k$-affinoid algebra is again $k$-affinoid.
Therefore, $\A\{p^{-1}f,qg^{-1}\}$ is $k$-affinoid. Further,
there exists a natural morphism $\A\to \A\{p^{-1}f,qg^{-1}\}$ such that it induces a closed embedding
$\M(\A\{p^{-1}f,qg^{-1}\})\hookrightarrow \M(\A)$ of topological spaces.
Set $X=\M(\A)$.
Then,
\[X\{p^{-1}f,qg^{-1}\}:=\{x\in X\ |\ |f_i|_x\leq p_i, |g_j|_x\geq q_j, 1\leq i\leq n,
1\leq j\leq m\}
\]
 is a closed set of $X$ that
is identified with $\M(\A\{p^{-1}f,qg^{-1}\})$
through this closed embedding.
Such affinoid spaces of the form $X\{p^{-1}f,qg^{-1}\}$
are called \emph{Laurent domains} in $X$.

We will make use of the following proposition later.
\begin{Fact}[{\cite[$\S$ 2.2]{Berk90}}]\label{Lau}
Let $X$ be a $k$-affinoid space.
 Laurent domains that contain a point $x\in X$
  form a basis of closed neighborhoods of $x$.
\end{Fact}
 We have considered the $k$-affinoid space $X=\M(\A)$ just as a topological space so far.
However, $X$ also has a structure sheaf $\mathscr{O}_X$
for which $(X,\mathscr{O}_X)$ becomes a locally ringed space (cf. \cite{Berk90}).
Then
an open set
 $U\subset X$
is also regarded as a locally ringed space, which
   is called  a \emph{$k$-quasiaffinoid space}.

\vspace{0.1in}
Roughly speaking, $k$-analytic spaces in Berkovich's sense
are obtained by gluing  $k$-quasiaffinoid spaces together.
Besides,
they have structure sheaves defined
by gluing  structure sheaves of $k$-quasiaffinoid spaces together.

We now explain  concretely how to construct the Berkovich analytification:

Set $n\in \Z_{>0}$ and $X:={\rm Spec } k[T_1,\dots, T_n]$, where $ k[T_1,\dots, T_n]$ is a polynomial ring in $n$ variables over $k$.
Then
the Berkovich analytification of $X$ is given as
\[X^{\rm an}:= \bigcup_{r\in\R^n_{>0}}E(0,r)
=\bigcup_{r\in\R^n_{>0}}D(0,r),
\]
where $D(0,r)=\{x\in E(0,r)\ | \ |T_i|_x<r_i, 1\leq i \leq n\}$.
This
$D(0,r)$ is a $k$-quasiaffinoid space as an open set in $E(0,r)$.
Then
we give a topological structure to $X^{\rm an}$ such that
the natural immersion  $D(0,r)\hookrightarrow X^{\rm an}$ is open
for each $r\in\R^n_{>0}$.
Moreover,
for any $r,r'\in \R^n_{>0}$ satisfying $r'-r\in\R^n_{\geq0}$,
these two open immersions $D(0,r)\hookrightarrow X^{\rm an}$ and $D(0,r')\hookrightarrow X^{\rm an}$
 are compatible with
 the natural open immersion $D(0,r)\hookrightarrow D(0,r')$.
That is, the following diagram commutes.
\[
\xymatrix{
D(0,r)\ar[rr]^-{}\ar[dr]_-{}&&D(0,r')\ar[dl]^-{}\\
&X^{\rm an}\ar@{}[u]|{\circlearrowright}&
}
\]
It implies that $X^{\rm an}$ has a structure sheaf defined by
gluing the structure sheaves on each $D(0,r)$ for $r\in\R^n_{>0}$ together.

Next, set $A:= k[T_1,\dots,T_n]/I$ for some ideal $I$ of $k[T_1,\dots,T_n]$ and set $X:={\rm Spec}A$.
Then
the Berkovich analytification of $X$ is given as
\[
X^{\rm an}:=
\bigcup_{r\in\R^n_{>0}}\M(k\{r^{-1}T\}/I\cdot k\{r^{-1}T\})
=\bigcup_{r\in\R^n_{>0}} D'(0,r),
\]
where $$D'(0,r):=\{x\in \M(k\{r^{-1}T\}/I\cdot k\{r^{-1}T\}) \
|\ |T_i|_x<r_i, 1\leq i \leq n\}.$$
Then the structure sheaf of $X^{\rm an}$ is defined by gluing together the structure sheaves on each $D'(0,r)$.
In this way, we obtain the associated analytic space with an affine scheme of finite type over $k$.
Note that, from a set-theoretic point of view,
the set $X^{\rm an}$ is identified with the set consisting of
all multiplicative seminorms on $A$ whose restrictions to $k$
coincide with the equipped norm on $k$.

Finally, let $X$ be a scheme of locally finite type over $k$.
Then the Berkovich analytification $X^{\rm an}$
is obtained by gluing together the associated $k$-analytic spaces $U^{\rm an}$
for each affine open subscheme $U\subset X$ of finite type over $k$.
In particular, this construction of $X^{\rm an}$ guarantees that
$U^{\rm an}$ is an open set in $X^{\rm an}$.
Note that gluing of Berkovich analytic spaces for the general case uses what is called
G-topology while we omit the detail.

\begin{Prop}\label{afnbd}
  Let $k$ be a non-Archimedean field and $X$ be a scheme
  of locally finite type over $k$.
Then, for each $x\in X^{\rm an}$, there exists a $k$-affinoid neighborhood
  $V
  %=\M(\A)
  \subset X^{\rm an}$ of $x$.
\end{Prop}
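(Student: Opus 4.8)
The plan is to unwind the definition of the analytification: a $k$-affinoid neighborhood of $x$ will simply be one of the closed-disc pieces out of which $X^{\rm an}$ is built, chosen large enough to contain $x$ in its quasiaffinoid interior.

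First I would reduce to the affine case. Let $U={\rm Spec}\,A\subseteq X$ be an affine open with $x\in U^{\rm an}$; such $U$ exists because $X^{\rm an}$ is obtained by glueing the spaces $U^{\rm an}$ attached to the affine opens of $X$, and for the same reason $U^{\rm an}$ is open in $X^{\rm an}$, so it suffices to produce a $k$-affinoid neighborhood of $x$ inside $U^{\rm an}$. Fix a presentation $A\cong k[T_1,\dots,T_n]/I$, so that, by construction,
\[
U^{\rm an}=\bigcup_{r\in\R_{>0}^n}\M\!\bigl(k\{r^{-1}T\}/I\!\cdot\! k\{r^{-1}T\}\bigr),
\]
where for $r\ge r'$ (meaning $r-r'\in\R_{\ge 0}^n$) the space $\M(k\{r'^{-1}T\}/I\!\cdot\! k\{r'^{-1}T\})$ is identified with $\{y\in\M(k\{r^{-1}T\}/I\!\cdot\! k\{r^{-1}T\}):|T_i|_y\le r'_i\ \forall i\}$, and the quasiaffinoid open subsets $D(0,r)\cap U^{\rm an}=\{y\in U^{\rm an}:|T_i|_y<r_i\ \forall i\}$ are the open charts of the glueing.

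Next I would choose the radii. Each $|T_i|_x$ is a finite nonnegative real number, being the value at $x$ of a bounded multiplicative seminorm on one of the algebras above; hence I may pick $r=(r_1,\dots,r_n)\in\R_{>0}^n$ with $r_i>|T_i|_x$ for every $i$ (taking $r_i=1$ when $|T_i|_x=0$). Put
\[
V:=\M\!\bigl(k\{r^{-1}T\}/I\!\cdot\! k\{r^{-1}T\}\bigr)\subseteq U^{\rm an}\subseteq X^{\rm an}.
\]
Then $V$ is $k$-affinoid: the ideal $I\!\cdot\! k\{r^{-1}T\}$ of the $k$-affinoid algebra $k\{r^{-1}T\}$ is closed by Proposition \ref{closed}, so $k\{r^{-1}T\}\twoheadrightarrow k\{r^{-1}T\}/I\!\cdot\! k\{r^{-1}T\}$ is an admissible surjection onto a Banach $k$-algebra, which is $k$-affinoid by definition. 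And $V$ is a neighborhood of $x$: by the identification recalled above, $V=\{y\in U^{\rm an}:|T_i|_y\le r_i\ \forall i\}$ contains $D(0,r)\cap U^{\rm an}=\{y\in U^{\rm an}:|T_i|_y<r_i\ \forall i\}$, which is open in $U^{\rm an}$, hence open in $X^{\rm an}$, and contains $x$ by the choice of $r$. Therefore $V$ is the required neighborhood.

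The argument is entirely a matter of unwinding the construction; the only small points to record are that the $r_i$ can be kept positive even where $|T_i|_x=0$, that $I\!\cdot\! k\{r^{-1}T\}$ is closed (which is exactly Proposition \ref{closed}), and that the chart $D(0,r)\cap U^{\rm an}$ is genuinely open in $X^{\rm an}$, which is built into the glueing data. Alternatively, one could take any $\rho$ with $|T_i|_x<\rho_i$, note $x\in\M(k\{\rho^{-1}T\}/I\!\cdot\!k\{\rho^{-1}T\})$, and invoke Proposition \ref{Lau} to find a Laurent — hence $k$-affinoid — neighborhood of $x$ inside it; but the direct choice above is cleaner. I do not expect any real obstacle: the statement is foundational.
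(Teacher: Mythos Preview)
Your proof is correct and follows exactly the approach the paper intends: the paper's own proof is the single line ``By the construction of $X^{\rm an}$, it is clear,'' and you have simply unwound that construction in detail. No differences to flag.
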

\begin{proof}
Pick $x\in X^{\rm an}$.
We may assume that $X$ is an affine scheme of the form ${\rm Spec} A$, where $A:=k[T_1,\dots,T_n]/I$ as the above construction.
  Further, by the construction of $X^{\rm an}$, we may assume that $x\in D'(0,r)$ for some $r\in \R_{>0}^n$ under the same notation as the construction.
  Now we pick $r'\in \R_{>0}^n$ such that $r'-r\in \R_{>0}^n$.
  Then
  it holds that
  $$x\in D'(0,r)\subset \M(k\{r^{-1}T\}/I\cdot k\{r^{-1}T\})\subset D'(0,r')\subset X^{\rm an}.$$
Therefore, the point $x\in X^{\rm an}$ has the $k$-affinoid neighborhood of the form
  $$\M(k\{r^{-1}T\}/I\cdot k\{r^{-1}T\}).$$
\end{proof}

Under the same notation as the proof of Proposition \ref{afnbd}, 
we obtain a completed residue field $\scrH(x)$ for any $x\in D'(0,r)$.
Then
a canonical morphism $\psi_x: {\rm Spec} \scrH(x) \to X$ is given by a homomorphism
$$A\cong k[T_1,\dots,T_n]/I\to k\{r^{-1}T\}/I\cdot k\{r^{-1}T\} \to \scrH(x).$$
These $\scrH(x)$ and $\psi_x$ do not depend on the choice of $k$-affinoid neighborhood. (See Proposition \ref{nb}.)
Here, a canonical continuous map $\pi_X: X^{\rm an} \to X$ is defined by $(x\mapsto \mathrm{Im } \psi_x)$.
Note that  the image of $\psi_x$ becomes a singleton contained in $X$ since $ {\rm Spec} \scrH(x)$ is a singleton.
In particular, $x\in X^{\rm an}$ induces a canonical homomorphism $\psi_x^\sharp: \kappa (\pi_X(x)) \to \scrH(x)$.

On the other hand,
$\pi_X(x)$ can also be written down as follows:
For each affine open subscheme $U={\rm Spec}A\subset X$, the restriction
$\pi_X|_{U^{\rm an}}:U^{\rm an}\to U$ is defined as a map sending
a multiplicative seminorm $x$ on $A$ to a prime ideal of the form $\mathfrak{p}_x=\{f\in A\ |\ |f|_x=0\}$.

\vspace{0.1in}
The Berkovich analytification $X\mapsto X^{\rm an}$
 satisfies many properties
including  GAGA type theorems.
In particular, we will use of the following proposition later.

\begin{Fact}[{\cite[$\S$ 3]{Berk90}}]
  \label{un}
  Let $k$ be a non-Archimedean field.
For any morphism
$\varphi :X\to Y$
 between two schemes locally of finite type over $k$,
there exists a natural morphism $\varphi ^{\rm an} :X^{\rm an}\to Y^{\rm an}$ as
 $k$-analytic spaces such that the following diagram commutes.
\[
\xymatrix{  X^{\rm an} \ar@{..>}[r]^{\varphi ^{\rm an}} \ar[d]^{\pi_X} &  Y^{\rm an} \ar[d]^{\pi_Y} \\
  X \ar[r]^\varphi & Y
  }
\]
\end{Fact}

\subsection{Centers} \label{center}
\

Let $X$ be a variety over a non-Archimedean field $k$.
For $x\in X^{\rm an}$, we will define the center of $x$.
Before that, we define a model of $X$.
\begin{Def}\label{model}
  In the above setting, a \emph{model} $\mathcal{X}$ of $X$ is a separated flat integral $k^\circ$-scheme
  of finite type with the datum of an isomorphism $$\mathcal{X}\times_{{\rm Spec}k^\circ}{\rm Spec}k\cong X.$$

\end{Def}

\begin{Rem}\label{pro}
  When $X$ is a projective variety, we can construct a projective model of $X$ as follows:
  Consider a closed immersion $X\hookrightarrow \mathbb{P}_k^n$. Since $\mathbb{P}_k^n$ is
  an open set in $\mathbb{P}_{k^\circ}^n$, we can take $\mathcal{X}$ as the closure of $X$ in $\mathbb{P}_{k^\circ}^n$.
On the other hand, if $k=k^\circ$, a model of $X$ is uniquely determined
up to isomorphisms as $X$ itself.
\end{Rem}
Now we consider the canonical continuous map $\pi_X: X^{\rm an} \to X$.
For any $x\in X^{\rm an}$, the canonical homomorphism $\kappa (\pi_X(x))\hookrightarrow \scrH(x)$
corresponds to the canonical morphism $\psi_x: {\rm Spec}\scrH(x)\to X$ as in the paragraph after Proposition \ref{afnbd}.
Then, we obtain ${\rm Spec}\scrH(x) \stackrel{\psi_x}{\longrightarrow} X \to \mathcal{X}$.
This morphism gives the following diagram.
\[
  \xymatrix{
    {\rm Spec} \scrH(x) \ar[r] \ar[d] & \mathcal{X} \ar[d]^{} \\
    {\rm Spec} \scrH(x)^\circ \ar[r]_{} \ar@{..>}[ru]_{} & {\rm Spec}k^\circ
  }
\]
This dotted arrow does not always exist. When it exists, we define the \emph{center of $x$} (in $\mathcal{X}$) as
its image of the unique closed point of ${\rm Spec}\scrH(x)^\circ$.
It is denoted as $c_\mathcal{X}(x)$.
By the valuative criterion of separatedness \cite{H}, such $c_\mathcal{X}(x)$ is uniquely determined if it exists.

Moreover, the above diagram is factored as below.

\[
  \xymatrix{
    {\rm Spec}\scrH(x) \ar[r]\ar[d]& {\rm Spec}\ \kappa (\pi_X(x)) \ar[r] \ar[d] & \mathcal{X} \ar[d] \\
    {\rm Spec}\scrH(x)^\circ \ar[r] \ar@{..>}[rru]&{\rm Spec} R_x \ar[r] \ar@{..>}[ru]^{} & {\rm Spec} k^\circ
  }
\]
The above $R_x$ is defined by
\[R_x:=\{f\in \kappa(\pi_X(x)) \ |\ |f|_x\leq 1\}.
\]

\begin{Rem}
  When $\mathcal{X}$ is a proper $k^\circ$-scheme, the center of $x$ always exists for any $x\in X^{\rm an}$.
  It follows from the valuative criterion of properness.
\end{Rem}

Now we describe the center of $x$ in $\mathcal{X}$ more concretely.
Suppose that $c_\mathcal{X}(x)\in {\rm Spec}A\subset \mathcal{X}$, where ${\rm Spec}A$ is an open affine subscheme in $\mathcal{X}$.
 Then it holds that $A\to \scrH(x)$ factors through $A\to R_x$.
Since
$\kappa(\pi_X(x))$ is a valuation field
with respect to $x$,
$R_x$ is its valuation ring and
\[
\mathfrak{m}_x:=\{f\in \kappa(\pi_X(x)) \ |\ |f|_x\ < 1\}
\]
is its maximal ideal,
where $|\cdot|_x$ is the valuation on $\kappa(\pi_X(x))$ (and $\scrH (x)$) induced by $x\in X^{\rm an}$.
Then $c_\mathcal{X}(x)\in {\rm Spec}A$ is given
as the point corresponding to a prime ideal of the form
$$\{f\in A\ |\ |f|_x<1\} \in {\rm Spec}A.
$$

\vspace{0.1in}

Now we get back to the topic.
Set $X^{\rm val}:=\pi_X^{-1}(\xi_X),$ where
$\xi_X$ is the generic point of $X$.
In other words, $X^{\rm val}$ corresponds to the set of
all points in  $X^{\rm an}$ that are identified with
valuations on the function field $K(X)$ whose restriction to $k$ is the equipped valuation on $k$.
Hence, for any $x\in X^{\rm val}$ and any affine open subscheme $U \subset X$,
 it holds that $x\in U^{\rm val}$.
 Indeed, 
 %by definition, it holds that $x\in U^{\rm val}$ for some affine open subset $U={\rm Spec}A$ in $X$.  
%It means that
%$x\in X^{\rm val}$ is a valuation on $A$
%whose restriction to $k\subset A$ is exactly the equipped valuation.
%Then, the valuation $x$ on $A$ is uniquely extended to a valuation on $K(X)$.
%Hence, 
for any open affine subscheme $U={\rm Spec }A$ in $X$,
 $x$ is also a valuation on $A$
whose restriction to $k(\subset A)$ is exactly the equipped valuation on $k$.
%It says that
% $x\in V^{\rm an}$.
In particular,
for any  birational map $f: Y\dashrightarrow X$ between two $k$-varieties,
we can regard any $x\in X^{\rm an}$ as an element of $Y^{\rm val}$.
Indeed, since 
any birational map $f: Y\dashrightarrow X$ induces an isomorphism $K(X)\to K(Y)$,
we can regard $x\in X^{\rm val}$ 
as an element of $Y^{\rm val}$ through the isomorphism.
%To emphasize that $x\in X^{\rm val}$ can be regarded as an element of $Y^{\rm val}$, 
Hence, we often denote $x\in X^{\rm val}$ by $x\in Y^{\rm val}$ to emphasize that.
Further, when  the center of $x\in X^{\rm val}$ in $\mathcal{X}$ exists and
$f: \mathcal{Y}\to \mathcal{X}$ is a proper birational morphism between two models,
 where $\mathcal{X}$ is a model of $X$ and $\mathcal{Y}$ is a model of $Y$,
we can apply the valuative criterion of properness to the following diagram.
 \[
   \xymatrix{
     {\rm Spec} K(X) \ar[r] \ar[d] & \mathcal{Y} \ar[d]^{f} \\
     {\rm Spec} R_x \ar[r]_{\varphi_{x,\mathcal{X}}} \ar@{..>}[ru]_{\varphi_{x,\mathcal{Y}}} & \mathcal{X}
   }
 \]
Then we obtain the unique morphism
 $\varphi_{x,\mathcal{Y}}:{\rm Spec} R_x\to \mathcal{Y}$.
We can identify this $\varphi_{x,\mathcal{Y}}$ with a morphism that appears when
we define $c_\mathcal{Y}(x)$
 as above
 for $x\in Y^{\rm val}$.
Since $\varphi_{x,\mathcal{X}}=f\circ \varphi_{x,\mathcal{Y}}$,
 we obtain
 $f(c_\mathcal{Y}(x))=c_\mathcal{X}(x)$
 by chasing the unique closed point of ${\rm Spec}R_x$.
 In particular, there exists an injection
 $
\kappa(c_\mathcal{X}(x))\hookrightarrow \kappa(c_\mathcal{Y}(x))$.
It means that the lifting of the center induces an extension of the
residue field of the center.

\section{Some basic properties of $\wt{\scrH (x)}$}

In this section, we  see general properties of the completed residue field $\scrH(x)$
and the double residue field $\wt{\scrH (x)}$.
Unless otherwise described, we assume that $k$ is a non-Archimedean field.

In $\S 2$, we defined the completed residue field $\scrH(x)$ for $x\in X=\M (\A)$.
According to the definition, $\scrH(x)$ seems to depend on the ambient $k$-affinoid space $X$.
Hence, we temporarily denote $\scrH(x)$ by $\scrH_X(x)$.
However, the following proposition asserts that the completed residue field does not depend on the choice of $k$-affinoid neighborhood of $x$.
Therefore, we often abbreviate $\scrH_X(x)$ as $\scrH(x)$.

\begin{Prop} \label{nb}
  Let $X$ be a $k$-affinoid space.
  For any $x \in X$ and any $k$-affinoid neighborhood $V$ of $x$ in $X$,
  there exists a canonical isomorphism $$\scrH_X(x)\stackrel{\simeq}{\longrightarrow} \scrH_V(x).$$
\end{Prop}

\begin{proof}

  It is an immediate consequence of \cite[Corollary 2.5.16]{Berk90}. 
\end{proof}

Next, we consider the case when $X$ is the Berkovich analytification of some
$k$-variety.

\begin{Prop}[{cf. \cite[Step (2) of the proof of Theorem 3.4.1]{Berk90}}]\label{Hx}
  Let $A$ be a finitely generated algebra over a non-Archimedean field $k$.
  For any $x\in ({\rm Spec}A)^{\rm an}$, set  $\mathfrak{p}_x =\{f\in A \ | \ |f|_x=0\}$ as in Definition \ref{Def1}.
  Then there is a canonical isomorphism
  \[\scrH(x)\cong \widehat{{\rm Frac}(A/\mathfrak{p}_x)},\]
  where the right-hand side is the completion of ${\rm Frac}(A/\mathfrak{p}_x)$
  with respect to the norm induced by $x$.
In other words, $\scrH(x)$ can be regarded as the completion of the
 residue field $\kappa (\mathfrak{p}_x)$ at
 $\mathfrak{p}_x\in {\rm Spec}A$ with respect to $x$.
\end{Prop}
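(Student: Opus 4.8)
The plan is to reduce to the $k$-affinoid situation already covered by the definition of $\scrH(x)$ and by Proposition \ref{nb}, and then to identify $\scrH(x)$ by exhibiting $\fr(A/\mathfrak{p}_x)$ as a dense normed subfield of the fraction field of the relevant affinoid algebra.

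First, fix a presentation $A\cong k[T_1,\dots,T_n]/I$ compatible with the construction of $X^{\rm an}$, and choose $r=(r_1,\dots,r_n)\in\R^n_{>0}$ with $x\in\M(\B)$ and $r_i>|T_i|_x$ for all $i$, where $\B:=k\{r^{-1}T\}/I\cdot k\{r^{-1}T\}$; this is possible since the affinoid spaces $\M(k\{r^{-1}T\}/I\cdot k\{r^{-1}T\})$ are nested in $r$ and exhaust $X^{\rm an}$, and $\B$ is $k$-affinoid by Proposition \ref{closed}. Because $\{y:|T_i|_y<r_i\ \forall i\}$ is an open neighborhood of $x$ in $X^{\rm an}$ contained in $\M(\B)$, the locality of the Berkovich residue field (Propositions \ref{afnbd} and \ref{nb}) lets us compute $\scrH(x)$ inside $\M(\B)$: that is, $\scrH(x)=\widehat{\fr(\B/\mathfrak{q})}$, where $\mathfrak{q}:=\ker(x:\B\to\R)$ and the completion is taken with respect to the norm induced by $x$. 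The natural ring homomorphism $A\to\B$ coming from $k[T]\hookrightarrow k\{r^{-1}T\}$ pulls the seminorm $|\cdot|_x$ on $\B$ back to $|\cdot|_x$ on $A$, so the kernel of the composite $A\to\B\to\B/\mathfrak{q}$ is exactly $\mathfrak{p}_x$; hence we obtain an injection $A/\mathfrak{p}_x\hookrightarrow\B/\mathfrak{q}$ and, on fraction fields, an isometric embedding $\fr(A/\mathfrak{p}_x)\hookrightarrow\fr(\B/\mathfrak{q})$ for the norms induced by $x$.

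The heart of the argument is a density statement. Polynomials are dense in $k\{r^{-1}T\}$ — this is exactly the defining convergence condition $|a_I|r^I\to 0$ — so the image of $A$ is dense in $\B$ for $||\cdot||_\B$. Since every ideal of $\B$ is closed (Proposition \ref{closed}), the quotient map $\B\to\B/\mathfrak{q}$ is bounded for the residue norm, and $|\cdot|_x$ on $\B/\mathfrak{q}$ is dominated by that residue norm because $x$ is bounded on $\B$; hence $A/\mathfrak{p}_x$ is dense in $\B/\mathfrak{q}$ for the norm induced by $x$. A routine ultrametric computation then upgrades this to density of $\fr(A/\mathfrak{p}_x)$ in $\fr(\B/\mathfrak{q})$: to approximate $\bar b_1/\bar b_2$ with $\bar b_2\neq0$, approximate $\bar b_1$ and $\bar b_2$ by elements $\bar a_1,\bar a_2$ of $A/\mathfrak{p}_x$; once $\bar a_2$ is close enough, its $x$-norm equals $|\bar b_2|_x\neq0$, so $\bar a_2$ is invertible in $\fr(A/\mathfrak{p}_x)$, and multiplicativity of $|\cdot|_x$ forces $\bar a_1/\bar a_2\to\bar b_1/\bar b_2$. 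Since a complete valued field is the completion of any of its dense subfields, we conclude $\scrH(x)=\widehat{\fr(\B/\mathfrak{q})}=\widehat{\fr(A/\mathfrak{p}_x)}$, which is the assertion; the reformulation follows because $\fr(A/\mathfrak{p}_x)=\kappa(\mathfrak{p}_x)$ by definition of the residue field at $\mathfrak{p}_x\in\mathrm{Spec}\,A$.

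There is no genuine obstacle here; the only points demanding a little care are bookkeeping ones — choosing $r$ large enough that $\M(\B)$ is a neighborhood of $x$, invoking Propositions \ref{afnbd} and \ref{nb} so that this affinoid computation really yields the intrinsic $\scrH(x)$, and checking that the successive quotients and fraction fields carry compatible norms — after which everything is formal.
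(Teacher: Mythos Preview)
Your proof is correct and follows essentially the same route as the paper's: pick an affinoid piece $\M(\B)$ of $X^{\rm an}$ containing $x$, invoke Proposition~\ref{nb} to compute $\scrH(x)$ there, and use density of polynomials in $k\{r^{-1}T\}$ to show that $A/\mathfrak{p}_x$ is dense in the relevant affinoid quotient. The only cosmetic differences are that the paper first reduces to $A=k[T_1,\dots,T_n]$ (so $I=0$) and then writes out explicit truncations $f_n=\sum_{|I|\le n}a_IT^I$ to exhibit the Cauchy sequence, whereas you keep the ideal $I$ throughout and argue density more abstractly via boundedness of the quotient map; your treatment of the passage to fraction fields is in fact more explicit than the paper's.
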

\begin{proof}
Take a $k$-affinoid neighborhood $V$ of $x$.
Then there exists a closed embedding $\iota: V\hookrightarrow E(0,r)$ for some $r\in \R_{>0}$.
Here, the morphism $\iota$ is also a closed immersion, which means that the morphism $\iota$ corresponds to a coherent ideal sheaf of $\mathscr{O}_{E(0,r)}$.
By definition of $\scrH(x)$, it holds that
$\scrH_V(x)\cong \scrH_{E(0,r)}(\iota (x))$.
Hence,
we may assume $A=k[T_1,\dots,T_d]$, which does not change $\widehat{{\rm Frac}(A/\mathfrak{p}_x)}$.
Take a $k$-affinoid neighborhood of $x$ of the form $E(0,r)=\M(k\{r^{-1}T\})$.
Set $\A:=k\{r^{-1}T\}$.
By  Proposition \ref{nb}, it holds that
$\scrH(x)\cong \scrH_{E(0,r)}(x)$.
For this $x\in E(0,r)$, we can also consider $\mathfrak{p}_{\A,x}=\{f\in \A \ |\ |f|_x=0 \}$.
Further, since $\mathfrak{p}_{x}=\mathfrak{p}_{\A,x}\cap A$,
$A\hookrightarrow k\{r^{-1}T\}=\A$ induces
$\varphi: A/\mathfrak{p}_x\hookrightarrow \A/\mathfrak{p}_{\A,x}$.

For any $f \in \A/\mathfrak{p}_{\A,x}$,
we can take a lift of the form
$$
\tilde{f}=\sum_{I\in \Z_{\geq 0}^d}a_{I}T^I\in \A= k\{r^{-1}T\}
$$
as in Definition \ref{def afd}.
Then we set $$f_n :=\sum_{|I|\leq n}a_{I}T^I\in A
$$ for any $n\in \Z_{\geq 0}$.
Note that  $x$ induces a bounded valuation
 on $\A/\mathfrak{p}_{\A,x}$.
 It is also denoted by $x$.
Then, 
\[
|f-\ol{f_n}|_x=|\sum_{|I|> n}a_{I}\ol{T}^I|_x
\]
\[
\leq ||\sum_{|I|> n}a_{I}\ol{T}^I||_{\A/\mathfrak{p}_{\A,x}}
\leq ||\sum_{|I|> n}a_{I}T^I||_{\A}\to 0
\]
as $n\to \infty$, where $\ol{f_n}$ means the image of $f_n$ through a homomorphism $A \twoheadrightarrow A/\mathfrak{p}_x\hookrightarrow \A/\mathfrak{p}_{\A,x}$. 
Since $A\hookrightarrow \A$ is dense,
$A/\mathfrak{p}_x$ is also dense in $\A/\mathfrak{p}_{\A,x}$ through the isometry $\varphi$.
It implies that ${\rm Frac}(A/\mathfrak{p}_x)$ is also dense in ${\rm Frac}(\A/\mathfrak{p}_{\A,x})$.
Indeed, for any non-zero element $h\in {\rm Frac}(\A/\mathfrak{p}_{\A,x})$, we can take non-zero
elements $f,g \in \A/\mathfrak{p}_{\A,x}$ such that $h=f/g$.
Since $A/\mathfrak{p}_x$ is  dense in $\A/\mathfrak{p}_{\A,x}$,
we can take a sequence $\{f_i\}_{i\in \Z_{>0}}$ (resp. $ \{g_j\}_{j\in \Z_{>0}}$) in $A/\mathfrak{p}_x$ converging to 
$f$ (resp. $g$).
In particular, we may assume that $f_i$ and $g_j$ are non-zeros for any $i,j\in \Z_{>0}$.
Then we consider a sequence $\{f_m/g_m\}_{m\in \Z_{>0}}$.
Now we show that the sequence converges to $f/g$.
Take $\varepsilon \in \R$ such that $0<\varepsilon <|g|_x$.
Then $|g_m-g|_x<\varepsilon$ implies 
$|g_m|_x=|g|_x$ since $|\cdot |_x$ is non-Archimedean.
Hence,
for sufficiently large $m\in \Z_{>0}$, 
we may assume $|g_m|_x=|g|_x$.
Then
it holds that $$\left|\frac{f}{g} -\frac{f_m}{g_m}\right|_x= \frac{|fg_m-f_mg|_x}{|g|_x^2} \leq \frac{\max\{|f(g_m-g)|_x, |g(f-f_m)|_x\}}{|g|_x^2 }\to 0$$
as $f_m\to f$ and $g_m\to g$.
Hence ${\rm Frac}(A/\mathfrak{p}_x)$ is dense in ${\rm Frac}(\A/\mathfrak{p}_{\A,x})$.
It implies that
$$\widehat{{\rm Frac}(A/\mathfrak{p}_x)} \cong \widehat{{\rm Frac}(\A/\mathfrak{p}_{\A,x})}=\scrH_{E(0,r)}(x) \cong \scrH(x).$$
\end{proof}

\begin{Cor}\label{Hx residue}
Let $X$ be a variety over a non-Archimedean field $k$.
Set $\pi_X: X^{\rm an}\to X$ as before.
Then, for any $x\in X^{\rm an}$, it holds that
\[
\scrH(x)\cong \widehat{\kappa(\pi_X(x))},
\]
where the right-hand side is the completion of the
 residue field $\kappa (\pi_X(x))$ at $\pi_X(x)\in X$ with respect to $x$.
 In particular, this isomorphism preserves the valuations.
\end{Cor}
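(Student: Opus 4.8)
The plan is to reduce at once to the affine case treated in the preceding proposition. Since $X$ is a variety, hence of finite type over $k$, the point $\pi_X(x)\in X$ lies in some affine open subscheme $U={\rm Spec}A$ with $A$ finitely generated over $k$. I would first recall that $X^{\rm an}$ is obtained by glueing the spaces $U^{\rm an}$ along open immersions and that $\pi_X$ restricts on each $U^{\rm an}$ to the map $\pi_U$ sending a multiplicative seminorm on $A$ to its kernel. Because $\pi_X(x)\in U$, this forces $x$ to lie in the open subset $U^{\rm an}\subset X^{\rm an}$, and moreover $\pi_U(x)=\mathfrak{p}_x$, the kernel of the restriction of $|\cdot|_x$ to $A$. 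This identifies the residue field $\kappa(\pi_X(x))$ with $\kappa(\mathfrak{p}_x)={\rm Frac}(A/\mathfrak{p}_x)$, equipped with the valuation $\ol{x}$ induced by $x$ as in $\S 2.1$.

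Next I would invoke the locality of $\scrH(x)$. By Proposition \ref{afnbd} there is a $k$-affinoid neighbourhood $V=\M(\A)$ of $x$ inside $U^{\rm an}$, and by Proposition \ref{nb} together with Proposition \ref{Lau} the Berkovich residue field $\scrH(x)$ does not depend on the choice of such a neighbourhood; in particular $\scrH(x)$ computed inside $X^{\rm an}$ coincides with the one computed inside $U^{\rm an}=({\rm Spec}A)^{\rm an}$. Applying the previous proposition to the finitely generated $k$-algebra $A$ then gives $\scrH(x)=\widehat{{\rm Frac}(A/\mathfrak{p}_x)}$, the completion with respect to the norm induced by $x$. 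Combining this with the identification of the first paragraph yields $\scrH(x)=\widehat{\kappa(\pi_X(x))}$, as claimed, and the closing sentence ``in other words'' of the previous proposition is just the affine instance of this.

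The only thing that genuinely needs care — and the step I expect to be the mild obstacle — is the book-keeping that $\scrH(x)$ is independent of the chosen affine chart $U$ and of the $k$-affinoid (or Laurent) neighbourhood used in its definition, together with the check that the norm on $\kappa(\pi_X(x))$ induced by $x$ agrees in the two descriptions. But this is exactly what Propositions \ref{nb}, \ref{afnbd} and \ref{Lau} supply, so once the affine statement is in hand the corollary is essentially immediate.
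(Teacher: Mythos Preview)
Your proposal is correct and follows exactly the paper's approach: the paper's proof is the single line ``It follows from the above proposition,'' and what you have written is simply a careful unpacking of why that reduction to the affine case is legitimate. Your invocation of Propositions \ref{nb}, \ref{afnbd} and \ref{Lau} to justify the locality of $\scrH(x)$ is more explicit than anything the paper writes here, but it is precisely the reasoning implicitly assumed by the paper's one-line proof.
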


\begin{proof}
  It follows from Proposition \ref{Hx}.
\end{proof}

Let $\mathcal{X}$ be a model of a $k$-variety $X$.
Set $\pi_X:X^{\rm an}\to X$ as before, and
take a point $x\in X^{\rm an}$.
Let us recall the definitions of the center $c_\mathcal{X}(x)$
and the double residue field $\wt{\scrH(x)}$.
If $c_\mathcal{X}(x)$ exists,
there is the canonical injection
\[\kappa (c_\mathcal{X}(x))\hookrightarrow \wt{\scrH(x)}.\]
Let $\mathcal{Y}$ be a model of a $k$-variety $Y$ such that 
there exists a proper birational morphism $f:\mathcal{Y}\to \mathcal{X}$.
Further, let us assume that
the induced morphism $K(X)\to K(Y)$ becomes the identity map
and $x\in X^{\rm val}$.
Then, it holds that
\[
\kappa(c_\mathcal{X}(x))\hookrightarrow \kappa(c_\mathcal{Y}(x))\hookrightarrow \wt{\scrH(x)}.
\]
In general, $\kappa(c_\mathcal{Y}(x))\cong  \wt{\scrH(x)}$ does not necessarily hold.
However, in some situations,
we can obtain $\mathcal{Y}$ such that
$\kappa(c_\mathcal{Y}(x))\cong  \wt{\scrH(x)}$ by taking appropriate blow-up.
That is the main theme of this paper.

\vspace{0.1in}
In general, we obtain the following results.

\begin{Lem}\label{res}

  Let $X$ be a variety over a non-Archimedean field $k$ and
  set $\pi_X: X^{\rm an}\to X$ as before.
  Then, for any $x\in X^{\rm an}$, it holds that
  \[
  \wt {\scrH(x)} \cong\wt{\kappa(\pi_X(x))},
  \]
  where the right-hand side is the residue field of the
 valuation field $\kappa (\pi_X(x))$ with respect to $x$.
   In particular, when $x\in X^{\rm val}$, it holds that
   \[
   \wt {\scrH(x)} \cong\wt{K(X)},
   \]
   where $K(X)$ is the function field of $X$.
\end{Lem}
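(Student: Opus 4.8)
The plan is to reduce the statement to the Corollary just proved together with the elementary fact that the (metric) completion of a rank-one valued field does not change its residue field.

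First I would invoke the Corollary: writing $\mathfrak{p}_x=\ker x$ and $L:=\kappa(\pi_X(x))$, we have $\scrH(x)=\widehat{L}$, the completion of the valued field $(L,x)$ with respect to the $\R_{\geq 0}$-valued valuation $x$. By the notation of $\S 2.3$ the valuation ring of $(L,x)$ is $R_x=\{f\in L\ |\ |f|_x\leq 1\}$ with maximal ideal $\mathfrak{m}_x=\{f\in L\ |\ |f|_x<1\}$, so that by definition $\wt{\kappa(\pi_X(x))}=R_x/\mathfrak{m}_x$, while $\scrH(x)^\circ$ and $\scrH(x)^{\circ\circ}$ are exactly the valuation ring and maximal ideal of $\widehat{L}$ and $\wt{\scrH(x)}=\scrH(x)^\circ/\scrH(x)^{\circ\circ}$.

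Next I would show that the inclusion $L\hookrightarrow\widehat{L}$ induces an isomorphism $R_x/\mathfrak{m}_x\xrightarrow{\ \sim\ }\scrH(x)^\circ/\scrH(x)^{\circ\circ}$. The norm on $\widehat{L}=\scrH(x)$ restricts to the norm $|\cdot|_x$ on $L$, so for $b\in R_x$ one has $b\in\scrH(x)^{\circ\circ}$ iff $|b|_x<1$ iff $b\in\mathfrak{m}_x$; this gives both well-definedness and injectivity of the induced map. For surjectivity, take $\alpha\in\scrH(x)^\circ$; since $L$ is dense in $\widehat{L}$ we may choose $b\in L$ with $|\alpha-b|_x<1$, and then, $x$ being non-Archimedean, $|b|_x\leq\max\{|\alpha|_x,|\alpha-b|_x\}\leq 1$, so $b\in R_x$, while $|\alpha-b|_x<1$ says $\alpha-b\in\scrH(x)^{\circ\circ}$, i.e. $\alpha$ and $b$ have the same class. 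Hence $\wt{\scrH(x)}=\scrH(x)^\circ/\scrH(x)^{\circ\circ}\cong R_x/\mathfrak{m}_x=\wt{\kappa(\pi_X(x))}$, which is the first assertion. For the ``in particular'' clause, if $x$ is a valuation then $\mathfrak{p}_x=(0)$, so $\pi_X(x)$ is the generic point of $X$ and $\kappa(\pi_X(x))=K(X)$, whence $\wt{\scrH(x)}=\wt{K(X)}$.

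I do not expect a genuine obstacle here: the argument is essentially bookkeeping on top of the Corollary. The one point that deserves a word of care is that $x$, being an $\R_{\geq 0}$-valued multiplicative seminorm, induces a \emph{rank-one} valuation on $\kappa(\pi_X(x))$, so that ``$\scrH(x)$ is the completion of $\kappa(\pi_X(x))$'' really means metric completion and the density argument above applies verbatim; for higher-rank valuations the analogous statement would require more care, but that case does not occur in the present setting.
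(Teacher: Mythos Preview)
Your proof is correct and follows essentially the same approach as the paper: invoke the preceding Corollary to identify $\scrH(x)$ with the completion of $\kappa(\pi_X(x))$, then use density together with the non-Archimedean triangle inequality to show that every unit-norm element of $\scrH(x)^\circ$ is congruent modulo $\scrH(x)^{\circ\circ}$ to an element of $R_x$. The paper's argument is terser (it only writes out the surjectivity direction, taking $f$ with $|f|=1$ and $\varepsilon<1$), while you spell out well-definedness and injectivity as well and add the rank-one caveat; but the underlying idea is identical.
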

\begin{proof}

It immediately follows from Corollary \ref{Hx residue}.
\end{proof}

Let $X$ be a variety over a non-Archimedean field $k$. For any $x\in X^{\rm val}$, we now define a directed set
$B(X,x)$  as follows:
First of all, we take a Grothendieck universe $\mathcal{U}$ such that $X\in \mathcal{U}$.
Here, we admit \emph{the axiom of universes} as we explained in \S 1. In particular, we assume that there is a 
Grothendieck universe $\mathcal{V}$ such that $\mathcal{U}\in \mathcal{V}$.
This assumption allows us to justify taking limits and colimits that run over any directed subset of  $\mathcal{U}$ in $\mathcal{V}$.
Let $B(X,x)$ be a $\mathcal{V}$-small category satisfying the following condition $(*)$, where the $\mathcal{V}$-smallness means being an element of $\mathcal{V}$, which means $B(X,x)\in \mathcal{V}$.
  \[
  (*):=
  \begin{cases}
  \underline{\text{If } k=k^\circ,} \\
 \mathrm{Ob}(B(X,x)):= \{ \text{all }
   k\text{-varieties}\ \mathcal{X} \text{ equipped with a datum of a }
   \\
   \text{proper birational morphism } f_\mathcal{X} :\mathcal{X}\to X 
   \text{ such that }  c_{\mathcal{X}}(x) 
   \text{  exists } \\
   \text{in }
   \mathcal{X}
   \text{ and, for some non-empty open }
   \text{subscheme }U\subset X, %\text{ such that }
   f_\mathcal{X}^{-1}(U)
   \\=U  \text{ and } 
   f_\mathcal{X}|_{f_\mathcal{X}^{-1}(U)}=\mathrm{id}_U 
   \text{ hold}
   \} \cap \mathcal{U}, \\
     \text{and for any } \mathcal{X} \text{ and } \mathcal{Y}\in \mathrm{Ob}(B(X,x)),\\
  \mathrm{Hom}(\mathcal{Y},\mathcal{X}):=\{\text{proper birational morphisms } f: \mathcal{Y}\to \mathcal{X} \text{ over } \\
  X
  \text{ such that } f|_U=\mathrm{id}_U \text{ holds for some non-empty open}
  \\ \text{subscheme }U\subset X
  \}. 
  \\
      \underline{\text{If } k\neq k^\circ,} \\
    \mathrm{Ob}(B(X,x)):= \{
    \text{all proper models}\ \mathcal{X}\ \text{of } X \text{ over } k^\circ
    %\text{  in which } c_{\mathcal{X}}(x) \text{  exists}
    \}\cap \mathcal{U}, \\
  \text{and for any } \mathcal{X} \text{ and } \mathcal{Y}\in \mathrm{Ob}(B(X,x)),\\
  \mathrm{Hom}(\mathcal{Y},\mathcal{X}):=\{\text{proper birational morphisms } f:\mathcal{Y}\to \mathcal{X}\\
  \text{such that } f|_X=\mathrm{id}_X \text{ holds}\}. 
  \end{cases}
  \]

For $\mathcal{X}, \mathcal{Y} \in B(X,x)$, it holds that
$\mathrm{Hom}(\mathcal{Y},\mathcal{X}) = \{\mathrm{1pt}\}$ or $\emptyset$
since $\mathcal{X}$ is separated.
In particular,  we can define a preorder on  $B(X,x)$ by 
$$\mathcal{X}\leq \mathcal{Y} :\Longleftrightarrow \mathrm{Hom}(\mathcal{Y},\mathcal{X}) \neq \emptyset.$$
Moreover,
the preorder makes
$B(X,x)$ a directed set. Indeed, for any two $\mathcal{X}, \mathcal{Y}\in B(X,x)$,
there exists a separated flat integral $k^\circ$-scheme $\mathcal{Z}$ of finite type such that there exist proper birational morphisms $f_{\mathcal{Z}, \mathcal{X}}:\mathcal{Z}\to \mathcal{X}$ and $f_{\mathcal{Z}, \mathcal{Y}}:\mathcal{Z}\to \mathcal{Y}$.
The scheme $\mathcal{Z}$ is obtained by taking the graph of a birational map $\mathcal{X} \dashrightarrow \mathcal{Y}$ in $\mathcal{X}\times_X \mathcal{Y}$ (resp. $\mathcal{X}\times_{k^\circ} \mathcal{Y}$) if $k=k^\circ$ (resp. $k\neq k^\circ$).
 If $k=k^\circ$, by taking an appropriate isomorphism, then we may assume that an induced morphism $f_\mathcal{Z} :\mathcal{Z}\to X$ by the universal property of $\mathcal{X}\times_X \mathcal{Y}$ satisfies  
$f_\mathcal{Z}^{-1}(U)=U$   and  
   $f_\mathcal{Z}|_{f_\mathcal{Z}^{-1}(U)}=\mathrm{id}_U$
     for some open    subscheme $U\subset X$.
Then $f_{\mathcal{Z}, \mathcal{X}}\in  \mathrm{Hom}(\mathcal{Z},\mathcal{X}) $ and $f_{\mathcal{Z}, \mathcal{Y}}\in  \mathrm{Hom}(\mathcal{Z},\mathcal{Y}) $ hold.
Further, $f_\mathcal{Z}:\mathcal{Z}\to X$ is a proper birational morphism, and the existence of $c_{\mathcal{Z}}(x)$ follows from the properness of $f_{\mathcal{Z}, \mathcal{X}}$. 
If $k\neq k^\circ$, then $\mathcal{Z}$ is proper over $k^\circ$ since $\mathcal{X}\times_{k^\circ} \mathcal{Y}$ is proper over $k^\circ$, and $\mathcal{Z}$ is also a model since $\mathcal{X}$ and $\mathcal{Y}$ are models.
In the same way as the case when $k=k^\circ$, we may assume that $f_{\mathcal{Z}, \mathcal{X}}\in  \mathrm{Hom}(\mathcal{Z},\mathcal{X}) $ and $f_{\mathcal{Z}, \mathcal{Y}}\in  \mathrm{Hom}(\mathcal{Z},\mathcal{Y}) $.
Hence, $\mathcal{Z}\in B(X,x)$ in either case.
It means that $B(X,x)$ is a directed set.

Note that
if $k=k^\circ$, blow-ups of $X$ are not models of $X$ in the sense of Definition \ref{model}.
That is the reason of our description of $B(X,x)$.

\begin{Thm}\label{union}
  Let $X$ be a variety over a non-Archimedean field $k$.
  For any $x\in X^{\rm val}$, define $B(X,x)$ as above.
  If $B(X,x)\neq \emptyset$, then it follows that
  \[\wt {\scrH(x)} =\bigcup_{\mathcal{X}\in B(X,x)} \kappa(c_{\mathcal{X}}(x)) \cong \ilim[\mathcal{X}\in B(X,x)] \kappa(c_{\mathcal{X}}(x)).\]
  Here, $\bigcup_{\mathcal{X}\in B(X,x)} \kappa(c_{\mathcal{X}}(x))$ just means the set-theoritic union of the images of canonical injections $\kappa(c_{\mathcal{X}}(x))\hookrightarrow \wt{\scrH(x)}$.
\end{Thm}

\begin{Lem}\label{unionn}
  In the same situation, we fix an element $\mathcal{X}\in B(X,x)$.  Here,
  we define $B(X,\mathcal{X},x)$ as a full subcategory of $B(X,x)$ consisting of
  all elements $\mathcal{X}'\in \mathrm{Ob}(B(X,x))$ such that $\mathrm{Hom}(\mathcal{X}',\mathcal{X})\neq \emptyset$.
  Then $B(X,\mathcal{X},x)$ is a cofinal directed set in $B(X,x)$.
 In particular,
  the following diagram commutes.
\[
  \xymatrix{
  \ilim[\mathcal{X}'\in B(X,\mathcal{X},x)] \kappa(c_{\mathcal{X}'}(x))
    \ar[r]^{\cong} \ar[d] & \ilim[\mathcal{Y}\in B(X,x)] \kappa(c_{\mathcal{Y}}(x)) \ar[d] \\
     \bigcup_{\mathcal{X}'\in B(X,\mathcal{X},x)}  \kappa(c_{\mathcal{X}'}(x))
     \ar[r]^{=} & 
     \bigcup_{\mathcal{Y}\in B(X,x)}  \kappa(c_{\mathcal{Y}}(x))
  }
\]
\end{Lem}
\begin{proof}[Proof of Lemma \ref{unionn}]
Take $\mathcal{Y}\in B(X,x)$.
Then we obtain $\mathcal{Z}\in B(X,\mathcal{X},x)$ such that $\mathrm{Hom}(\mathcal{Z},\mathcal{Y})\neq \emptyset$ since $B(X,x)$ is a directed set.
\end{proof}

\begin{proof}[Proof of Theorem \ref{union}]
The universal morphism
$$\ilim[\mathcal{X}\in B(X,x)] \kappa(c_{\mathcal{X}}(x)) \to \bigcup_{\mathcal{X}\in B(X,x)} \kappa(c_{\mathcal{X}}(x)) $$
is an isomorphism.
Hence, by Lemma \ref{unionn}, it suffices to show that
$$\wt {\scrH(x)} =\bigcup_{\mathcal{X}'\in B(X,\mathcal{X},x)} \kappa(c_{\mathcal{X}'}(x)).$$
Here,  $\mathcal{X}\in B(X,x)$ is fixed as Lemma \ref{unionn}. Suppose that an open affine subscheme $U={\rm Spec}A$ in $\mathcal{X}$
contains $x$.
By Lemma \ref{res}, we obtain
\[
\wt {\scrH(x)} \cong \wt{K(X)},
\]
where $\wt{K(X)}$ is defined by taking the residue field of
the valuation field $K(X)$
 with respect to $x$ as in Lemma \ref{res}.
For each $f\in\wt {\scrH(x)} \setminus \kappa\left( c_\mathcal{X}(x)\right)$,
we can take some $g,h\in A$ such that  $|h|_x \geq |g|_x \neq 0$ and $f=\ol{g/h} \in \wt {\scrH(x)} \cong \wt{K(X)}$.
Set $a=0$ (resp. $a\in k^\circ$ such that $|a|_x\in (0,1)$) if $k=k^\circ$ (resp. if $k\neq k^\circ$).
Then it holds that $|a^l|_x\leq |g|_x\leq |h|_x$ for some $l\in \Z_{>0}$ since $|g|_x>0$.
Consider an ideal $I=(g,h,a^l)$  of $A$.
Then, there is an ideal sheaf $\mathscr{I}$ on $\mathcal{X}$ such that $\wt{I}=\mathscr{I}|_U$
by \cite[p.126, Exercise 5.15]{H},
where $\wt{I}$ is the ideal sheaf on $U$ associated with $I$.
Now we consider a blow-up $\pi$ of $\mathcal{X}$ along $\mathscr{I}$. That is,
$\pi:\mathcal{X}':={\mathscr Bl}_{\mathscr{I}}\mathcal{X}\to \mathcal{X}.$ 
If $k\neq k^\circ$, 
this $\mathcal{X}'$ is a model of $X$.
Indeed, if  that's the case, $V(\mathscr{I})\cap X =\emptyset$ holds since $a^l\in k^\circ$ is a unit in $k(\neq k^\circ)$, which implies $\mathcal{X}'\times_{{\rm Spec} k^\circ} {\rm Spec}k \cong X$.
In both cases, $\mathcal{X}'\in B(X,x)$ holds. Now
$\mathcal{X}'$ has an open affine scheme $U'={\rm Spec}A[g/h, a^l/h]$.
Set $A':=A[g/h,a^l/h]$.
Since $f=\ol{g/h}\in\wt {\scrH(x)} \setminus \kappa\left( c_\mathcal{X}(x)\right)$, it holds that 
$|g/h|_x= 1$.
Further, $|a^l/h|_x\leq 1$ holds by definition.
Hence, it follows that $A'\subset R_x=\{f'\in \kappa (\pi_X(x))\ | \ |f'|_x\leq 1\}$. 
It implies $c_{\mathcal{X}'}(x)\in {\rm Spec}A'$.
Then we obtain
$\ol{g/h} \in A'/\mathfrak{p}_{c_\mathcal{X'}(x)} \subset \kappa(c_\mathcal{X'}(x))$.
Here, $\mathfrak{p}_{c_\mathcal{X'}(x)}$ is a prime ideal on $A'$ corresponding to $c_\mathcal{X'}(x)$.
Thus, it follows that $f=\ol{g/h}$ comes from
the
natural injection $\kappa(c_\mathcal{X'}(x))\hookrightarrow \wt {\scrH(x)}$.
\end{proof}

\begin{Def}\label{class}
  Let $X$ be a variety over a non-Archimedean field $k$.
  Denote by $X^*$ \emph{the set of all points $x$ in $X^\mathrm{val}$ such that
  $\wt{\scrH(x)}= \kappa (c_{\mathcal{X}}(x))$ holds
  for some $\mathcal{X}\in B(X,x)$.}

\end{Def}

\begin{Thm}\label{equal}
  Let $X$ be a variety over a non-Archimedean field $k$.
  Suppose that $x\in X^{\rm val}$ satisfies $B(X,x)\neq\emptyset$.
  Then the followings are equivalent.
  \begin{enumerate}
    \item \label{X*}
    $x\in X^*$
    \item \label{fg over model}
    $\wt{\scrH(x)}$ is finitely generated over $\kappa (c_{\mathcal{X}}(x))$
    as a field for any $\mathcal{X}\in B(X,x)$.
    \item \label{fg over k}
    $\wt{\scrH(x)}$ is finitely generated over $\wt{k}$
    as a field.
  \end{enumerate}
\end{Thm}
\begin{proof}
For $\mathcal{X}\in B(X,x)$, $c_\mathcal{X}(x)\in \mathcal{X}$ is on the fiber of the closed point of ${\rm Spec} k^\circ$.
Then we obtain a canonical injection $\wt{k}\hookrightarrow \kappa(c_\mathcal{X}(x))$.
Since $\mathcal{X}$ is a $k^\circ$-variety, $\kappa(c_\mathcal{X}(x))$ is a finitely generated over $\wt{k}$ as a field.
Therefore, it follows that \eqref{X*} implies \eqref{fg over k}.

Since $\wt{k} \hookrightarrow \kappa(c_\mathcal{X}(x)) \hookrightarrow \wt{\scrH(x)}$, it follows that
\eqref{fg over k} implies \eqref{fg over model}.

Hence, it suffices to show that \eqref{fg over model} implies \eqref{X*}.
Take $\mathcal{X}\in B(X,x)$ and suppose that
$$\wt{\scrH(x)}=\kappa(c_\mathcal{X}(x))(f_1,\dots ,f_n)$$ for some $f_1,\dots,f_n\in \wt{\scrH(x)}$. 
In a similar way as the discussion of Theorem \ref{union}, 
we obtain $\mathcal{X}_1 \in B(X,x)$ such that $\kappa(c_\mathcal{X}(x))(f_1)\subset \kappa(c_{\mathcal{X}_1}(x))$.
By repeating this discussion,
we finally obtain $\mathcal{X}_n \in B(X,x)$ such that $$\wt{\scrH(x)}=\kappa(c_\mathcal{X}(x))(f_1,\dots ,f_n)\subset \kappa(c_{\mathcal{X}_n}(x)).$$
It implies $\kappa(c_{\mathcal{X}_n}(x))=\wt{\scrH(x)}$. Hence, \eqref{fg over model} implies \eqref{X*}.
\end{proof}

In general, $X^{\rm val}\neq X^*$. The following  gives such an example.
\begin{Ex}[{\cite[Chap.3 II, footnote 12, p.864]{10.2307/1968864}}]
  Let $k(S)$ be the rational function field in a variable $S$ over a field $k$ equipped with the trivial valuation on $k(S)$.
  Set $X=\mathbb{A}_{k(S)}^2=\textrm{Spec}k(S)[T,U]$, where $T$ and $U$ are variables corresponding to coordinates of $X$.
  Let $\overline{k(S)}[[T]]$ be the ring of formal power series in $T$ whose coefficients are in the algebraic closure of $k(S)$.
  This $\overline{k(S)}[[T]]$ has the non-trivial $T$-adic valuation $\nu$ 
  and the valuation $\nu$ can be extended to the quotient field $\overline{k(S)}((T))$.
  Now we fix a compatible system $\{S^{\frac{1}{2^n}}\}_{n\in \Z_{>0}}$ and define $\varphi : k(S)[T,U]\to \overline{k(S)}((T))$ by
  \[S\mapsto S, T\mapsto T, U\mapsto \sum_{n\geq1}S^{\frac{1}{2^n}}T^n.\]
This $\varphi$ is injective. Hence the pullback of $\nu$ through $\varphi$ is also a valuation.
Denote this valuation by $x\in X^{\rm val}$, where the base field for construction of $X^\mathrm{an}, X^\mathrm{val}$ and $X^*$ should be $k(S)$ in this example.
 By construction, $x$ has the center in $X$, so that
$B(X,x)\neq \emptyset$.
Further, 
$\bigcup_{n\geq 1}k(S^{\frac{1}{2^n}})\subset\wt{\scrH(x)}$ follows from {\it loc}. {\it cit}.
It implies that $\wt{\scrH(x)}$ is not finitely generated over $k(S)(=\wt{k(S)})$.
Hence, it follows from  Theorem \ref{equal} that $x\notin X^*$.
\end{Ex}
\begin{Rem}
On the other hand, when $k$ is an algebraically closed non-Archimedean field,
it holds that $(\mathbb{A}_{k}^1)^{\rm val}=(\mathbb{A}_{k}^1)^*$ (cf.  \cite[Proposition 2.3]{BR10}).
\end{Rem}

In Theorem \ref{union}, the existence of a model (i.e. $B(X,x)\neq\emptyset$) is crucial.
If $X$ or $k^\circ$ satisfies certain conditions, then it is guaranteed that a model exists.
The followings are results concerning such conditions.
\begin{Prop}
  Let $X$ be a projective variety over a non-Archimedean field $k$.
  For any $x\in X^{\rm val}$, it holds that
$B(X,x)\neq \emptyset$.

\end{Prop}
\begin{proof}
  Since $X$ is projective, we obtain some projective model $\mathcal{X}$ of $X$ as Remark \ref{pro}.
  For any projective model $\mathcal{X}$, 
  it follows from the valuative criterion of properness that the model $\mathcal{X}$ has
  the center of $x$.
  Hence, it holds that $\mathcal{X}\in B(X,x)$.
\end{proof}
\begin{Prop}\label{model existence for proper}
  Let $X$ be a proper variety over a non-Archimedean field $k$.
  Assume that $k^\circ$ is a DVR (not a field).
  For any $x\in X^{\rm val}$, 
  it holds that  $B(X,x)\neq \emptyset$.

\end{Prop}
\begin{proof}
  Since $k^\circ$ is a DVR, then $k$ is finitely generated over $k^\circ$.
  Hence $X$ is an integral scheme of finite type over the noetherian integral scheme ${\rm Spec} k^\circ$.
Therefore we can take some proper $k^\circ$-variety $\mathcal{X}$ such that $X\hookrightarrow \mathcal{X}$ is an open immersion as $k^\circ$-scheme by Nagata compactification \cite{nagata1962}.
Then  $\mathcal{X}\to {\rm Spec} k^\circ$ is surjective by the valuative criterion of properness.
 Since $k^\circ$ is one dimensional, it follows that $\mathcal{X}\to {\rm Spec} k^\circ$ is flat.
 Now $X\hookrightarrow \mathcal{X}_k :=\mathcal{X}\times_{{\rm Spec}k^\circ} {\rm Spec}k$ is an open immersion.
 Since $\mathcal{X}\to {\rm Spec}k^\circ$ is proper, $\mathcal{X}_k\to {\rm Spec}k$ is also proper.
 These give the following commutative diagram.
 \[
   \xymatrix{
    X \ar[r]^{\rm open} \ar[rd]_{\rm proper} & \mathcal{X}_k \ar[d]^{\rm proper} \\
     & {\rm Spec} k
   }
 \]
It follows from the above diagram that $X \hookrightarrow \mathcal{X}_k$ is proper.
In particular, $X$ is open and closed in $\mathcal{X}_k$.
Since $\mathcal{X}_k$ and $X$ are integral, it holds that $X\cong \mathcal{X}_k$.
Hence $\mathcal{X}$ is a proper model of $X$.
For any proper model $\mathcal{X}$, it follows from the valuative criterion of properness
that the model $\mathcal{X}$ has the center of $x$.
Hence, it holds that $\mathcal{X}\in B(X,x)$.
\end{proof}

Now we consider
 a variety  $X$ over a non-Archimedean field $k$ under the assumption that $k^\circ$ is a DVR (not a field).
Then we can take a proper $k$-variety $Y$ containing $X$ as an open subscheme by Nagata compactification.
Once we fix such a $Y$, we can regard $x\in X^{\rm val}$ as an element of $Y^{\rm val}$ through the injection $X^{\rm an}\hookrightarrow Y^{\rm an}$.
 Then Proposition \ref{model existence for proper} 
 implies $B(Y,x)\neq \emptyset$.
 Therefore, it follows from Theorem \ref{union} that 
$$\wt {\scrH(x)} =\bigcup_{\mathcal{Y}\in B(Y,x)} \kappa(c_{\mathcal{Y}}(x)).$$

\section{$\wt{\scrH (x)}$ for quasi monomial valuations}

In this section, we consider quasi monomial valuations.
Before that, we give the definition of monomial valuations for two different setups as below (Definitions \ref{m1}, \ref{m2}).
After that, we introduce quasi monomial valuations (Definition \ref{aq}).
Note that Definition \ref{m2} and \ref{aq} are common (cf. \cite[\S 24.1.2]{Mat-text}), however, 
Definition \ref{m1} is not common.
In this section, we assume that the base field $k$ is a trivially valued field.

\begin{Def}[Monomial valuation on a polynomial ring]\label{m1}
Let $k$ be a field. Fix $n\in \Z_{>0}$ and
set $A:=k[X_1,\dots,X_n]$.
Then, a \emph{monomial valuation} $|\cdot|$ on $A$ is a valuation on $A$ defined as follows:
There are positive real numbers $r_1,\dots,r_n$ such that
for any
\[
f=\sum_{I\in \Z_{\geq 0}^n} a_IX^I \in k[X_1,\dots,X_n]\setminus \{0\},
\]
 $|\cdot|$ returns the following values.
\[
|f|:=\max_{a_I\neq 0}r^I,
\]
where $I=(i_1,\dots,i_n)$, $a_I\in k$, $X^I=X_1^{i_1}\cdots X_n^{i_n}$ and $r^I=r_1^{i_1}\cdots r_n^{i_n}$.
Besides, define $|0|:=0$.
\end{Def}

This monomial valuation can be regarded as an element $x$ of $({\rm Spec}A)^{\rm an}$, where
 we regard $k$ as a trivially valued field.
In the setting of Definition \ref{m1},
if $r_i\leq 1$ holds for all $i$, then the center of $x$ in ${\rm Spec}A$ exists since $A\hookrightarrow \scrH(x)^\circ$ follows.
 
Since this definition is too restrictive,
 we would like to define monomial valuations more generally.
Then we make use of Cohen's structure theorem.

\begin{Def}[Monomial valuation on a nonsingular point of variety]
\label{m2}
Let $X$ be a variety over a trivially valued field $k$. Let $p\in X$ be a nonsingular point.
(We do not assume that
$p\in X$ is a closed point.)
By using Cohen's structure theorem, if $p$ is not the generic point of $X$, then
there exist $m\in \Z_{> 0}$ and an injection
$\kappa(p)\hookrightarrow \widehat{\mathscr{O}_{X,p}}$ (which is not unique)
such that
for any system of algebraic coordinates $(f_1,\dots,f_m)$ at the point $p\in X$,
we obtain the following isomorphism as $\kappa(p)$-algebra to the ring of formal
power series.
\[
\widehat{\mathscr{O}_{X,p}}\cong \kappa (p)[[t_1,\dots,t_m]],
\]
where this isomorphism sends $f_i$ to $t_i$ and depends on the choice of the
embedding of the residue field $\kappa(p)\hookrightarrow \widehat{\mathscr{O}_{X,p}}$.
For simplicity, we regard $\kappa(p)$ as a subring of $\widehat{\mathscr{O}_{X,p}}$ by fixing the injection $\kappa(p)\hookrightarrow \widehat{\mathscr{O}_{X,p}}$.

In the above situation, a \emph{monomial valuation $|\cdot|$ on a nonsingular point} $p\in X$
is a valuation on $\mathscr{O}_{X,p}$ defined as follows:
There are positive real numbers $r_1,\dots,r_m$ that are less than $1$ such that
for each
\[
f=\sum_{I\in \Z_{\geq 0}^m} a_If^I \in \mathscr{O}_{X,p} \setminus \{0\}
\subset \kappa(p) [[f_1,\dots,f_m]],
\]
$|\cdot|$ returns the following values.
\[
|f|:=\max_{a_I\neq 0}|r^I|,
\]
where $I=(i_1,\dots,i_m)$, $a_I\in \kappa(p)$, $f^I=f_1^{i_1}\cdots f_m^{i_m}$
and $r^I=r_1^{i_1}\cdots r_m^{i_m}$.
Besides, define $|0|:=0$.
If $p$ is the generic point of $X$, then  a monomial valuation $|\cdot|$ on $p\in X$
is defined as the trivial valuation on $\mathscr{O}_{X,p}(=K(X))$.

\end{Def}

At a glance, this definition depends on the
choice of algebraic coordinates at $p\in X$,
their values and the embedding of the residue field.
However, it does not depend on the choice of the embedding of the
residue field (See \cite[Proof of Proposition 3.1]{jonsson2010valuations}).
In addition, the mononimal valuation $|\cdot |$ can be regarded as a point $x$ of
$X^{\rm an}$. 
In particular, 
the center of $x$ in $X$ exists and it holds that
$c_X(x)=p$.
When we consider valuations, it is not essential to fix a birational model.
Therefore, the following valuation is the most essential among three definitions.
\begin{Def}[Quasi monomial valuation]\label{triv qmo}
  \label{aq}
  Let $X$ be a variety over a trivially valued field $k$.
  A valuation $x\in X^{\rm val}$ is a \emph{quasi monomial valuation}
  if
 there is a proper birational morphism
  $f:Y\to X$ such that
  the valuation $x$ coincides with a monomial valuation on some nonsingular point  $q\in Y$
  in the sense of Definition \ref{m2}.
\end{Def}
Note that the center of a quasi monomial valuation on $X$ can be a singular point.
Originally, quasi monomial valuation is a valuation over a trivially valued field.
However, we can extend it to a valuation over a nontrivially valued field in some situations
(see Definition \ref{qmo}).

Now we state a key lemma for monomial valuations.

\begin{Lem}\label{mon}
  Let $x=|\cdot|$ be a monomial valuation on the polynomial ring
  $k[X_1,\dots,X_n]$ over a trivially valued field $k$ for some $n\in \Z_{>0}$. Suppose that $\sqrt{|\mathscr{H}(x)^\times |}\cong \Q^r$ for some $r\in \Z_{\geq 0}$.
Then, $\widetilde{\mathscr{H}(x)}$ is isomorphic to the rational function field
  in $n-r$ variables over $k$. 
\end{Lem}
\begin{proof}
Define a homomorphism $\varphi :\Z^n \to |\mathscr{H}(x)^\times|$
by
\[I=(i_1,\dots ,i_n) \mapsto |X^I|=|X_1^{i_1}\cdots X_{n}^{i_n}|.\]
First of all, we  see that $\varphi$ is surjective.
 Since $\mathscr{H}(x)$ is the completion of $k(X_1,\dots , X_n)$
 with respect to $x$,
 for any $f\in\mathscr{H}(x)$ and any $\varepsilon >0$,
 there exists $g \in k(X_1,\dots , X_n)$ such that $|f-g|<\varepsilon$.
For non-zero $f\in \mathscr{H}(x)$, we can take $\varepsilon >0$ such that $\varepsilon <|f|$ .
Then  $|f|=|g|$ holds
since $x$ is non-Archimedean.
Hence it holds that $|k(X_1,\dots,X_n)^\times|=|\mathscr{H}(x)^\times|$.
Since $x$ is a monomial valuation,
for $f=\sum a_IX^I\in k[X_1,\dots ,X_n]$, we have
$|f|=\max\{|X^I|\in\R|\ a_I\neq 0\}$. It implies $|k(X_1,\dots,X_n)^\times|= {\ima} \varphi$.
Therefore, $\varphi$ is surjective.
That is, the following sequence is exact.
\[0\to {\rm ker}\varphi \to \Z^n \to  |\mathscr{H}(x)^\times| \to 0\]
Since ${\rm ker}\varphi$ is
a submodule of the free $\Z$-module $\Z^n$,
${\rm ker}\varphi$ is a free $\Z$-module.
Furthermore, it holds that ${\rm ker}\varphi\cong \Z^{n-r}$
because of the above exact sequence
and the isomorphism $\sqrt{|\mathscr{H}(x)^\times|}\cong \Q^r$.
Then we  show that
\[\widetilde{\mathscr{H}(x)}\cong{\rm Frac}(k[{\ker}\varphi]),\]
where $k[{\rm ker}\varphi]$ is the group ring of ${\ker}\varphi$ over $k$.
Consider an isomorphism $$k[\Z^n] \cong k[X_1^\pm,\dots,X_n^\pm]$$ defined by $\Z^n \ni I \mapsto X^I$.
Then
a morphism $k[\ker \varphi ] \to k[X_1^\pm ,\dots,X_n^\pm]$ induced by the homomorphism $\ker \varphi \to \Z^n$ sends
$ I \in 
\ker\varphi$ to $X^I$.
In particular, it induces an injection
 $k[{\rm ker}\varphi]\hookrightarrow k(X_1,\dots ,X_n)^\circ$,
 where the right-hand side is the valuation ring of %the algebraic valuation field
 $k(X_1,\dots ,X_n)$ with respect to $x$.
 Then the injection induces a morphism $k[{\rm ker}\varphi]\to \widetilde{\mathscr{H}(x)}$.
 Moreover, the morphism is injective 
  since $|X^I|=1$ for any $I\in \ker \varphi$.
Hence we have an injection
$${\rm Frac}(k[{\rm ker}\varphi])\hookrightarrow
 \widetilde{\mathscr{H}(x)}.$$
 Therefore we have to show  that the homomorphism is surjective.
By Lemma \ref{res}, we obtain
$\widetilde{\mathscr{H}(x)}\cong\widetilde{k(X_1,\dots ,X_n)}$.
Now we can write any element in $\widetilde{\mathscr{H}(x)}$
as $\overline{f}$,
 where $f\in k(X_1,\dots,X_n)^\circ$.
 For non-zero $\ol{f}\in \widetilde{\mathscr{H}(x)}$, let us choose $g,h\in k[X_1,\dots,X_n]$ such that $f=g/h$, where $g,h$ are non-zero.
If we write $g=\sum a_IX^I$, then,
for any $I\in \Z_{\geq 0}^n$ such that
$|a_IX^I|<|g|$,
\[\overline{\left( \frac{a_IX^I}{h}\right)}=0\]
holds since $|g|=|h|$.
Therefore we have
\[\overline{f}=\sum_{|a_IX^I|=|g|}\overline{\left(\frac{a_IX^I}{h}\right)}.\]
Note that,  since $|g|>0$, for  any $I\in \Z_{\geq 0}^n$ such that  $|a_IX^I|=|g|$,  $a_I$ is non-zero and $|X^I|=|g|=|h|$. 
Hence, it is enough to show that,
for each $I\in \Z_{\geq 0}^n$,  $|X^I|=|h|$ implies
$\overline{X^I/h}\in {\rm Frac}(k[{\rm ker}\varphi])$.
Assume $|X^I/h|=1$. It means that $\overline{X^I/h}\neq 0$.
Since ${\rm Frac}(k[{\rm ker}\varphi])$ is a field,
it is enough to show that $\overline{h/X^I} \in {\rm Frac}(k[{\rm ker}\varphi])$.
If we write $h=\sum b_JX^J$, then
\[\overline{\left(\frac{h}{X^I}\right)}=\sum_{|b_JX^J|=|X^I|}\overline{\left(\frac{b_JX^J}{X^I}\right)}=
\sum_{|X^{J-I}|=1}b_J\overline{X^{J-I}}\]
holds in the same way as above.
Hence, $\overline{h/X^I} \in {\rm Frac}(k[{\rm ker}\varphi])$ holds since
 $X^{J-I}\in k[{\rm ker}\varphi]$.
Therefore,
$\widetilde{\mathscr{H}(x)}\cong{\rm Frac}(k[{\rm ker}\varphi])$ holds.
In particular,
we obtain an isomorphism $
k[{\rm ker}\varphi]\cong k[\Z^{n-r}]
$
since ${\rm ker}\varphi \cong \Z^{n-r}$.
Hence, 
the assertion holds.
\end{proof}

Actually, Berkovich proves  that 
 $\widetilde{\mathscr{H}(x)}$ is isomorphic to the rational function field over $k$ (See \cite[Lemma 5.8]{Berk99}). 
 However, he does not compute its transcendental degree.
Lemma \ref{mon} enables us to compute the transcendental degree of $\widetilde{\mathscr{H}(x)}$.
For 
any variety $X$ over a trivially valued field $k$ and any $x\in X^{\rm val}$, it is known that the following inequality holds.
$$
\dim _\Q \sqrt{|\mathscr{H}(x)^\times |} + {\rm tr deg}_k  \widetilde{\mathscr{H}(x)}
\leq \dim X.
$$
This is called the \emph{Abhyankar inequality} (cf. %\cite{STEV} and
\cite{ABH}).
We call $x$ an \emph{Abhyankar valuation} when the equality is achieved.
Lemma \ref{mon} implies that monomial valuations are Abhyankar valuations
since, in the setting of Lemma \ref{mon},
it holds that $\dim _\Q \sqrt{|\mathscr{H}(x)^\times |}=r$, ${\rm tr deg}_k  \widetilde{\mathscr{H}(x)}=n-r$
and $\dim X =n$.

\begin{Cor}\label{equal2}
  In the above situation, 
  set $X=\mathbb{A}_k^n$.
  If  the center of $x$ in $X$ exists,
then
  there exists a blow-up
  $\pi : X'\to X$ such that
  \[\wt {\scrH(x)} =\kappa(c_{X'}(x)).\]
\end{Cor}

\begin{proof}
  It follows from Lemma \ref{mon} and Theorem \ref{equal}.
\end{proof}

We can extend Corollary \ref{equal2}
to quasi monomial valuations as follows.

\begin{Thm}\label{smon}
  Let $X$ be a variety over a trivially valued field $k$ and
$x\in X^{\rm val}$ be a quasi monomial valuation.
   Then, $\widetilde{\mathscr{H}(x)}$ is finitely generated
   over $\kappa(c_X(x))$ as a field and
   there exists a blow-up
   $\pi : X'\to X$ such that
   \[\wt {\scrH(x)} =\kappa(c_{X'}(x)).\]
\end{Thm}

\begin{proof}
If $x$ is the trivial valuation on $K(X)$, then $X$ itself satisfies the condition.
Hence, from now on, we assume that $x$ is not the trivial valuation.
  By taking an appropriate blow-up, we may assume that $x\in X^{\rm val}$ is a monomial valuation on a nonsingular point $p\in X$.
  Set $\sqrt{|\mathscr{H}(x)^\times |}\cong \Q^r$
  and $\dim \mathscr{O}_{X,p}=m(>0)$.
Since $x\in X^{\rm val}$ is a monomial valuation on $p\in X$,
the valuation $x$, in particular, satisfies the following.

Use the same notation as Definition \ref{m2}.
  For some system of algebraic coordinates $(f_1,\dots,f_m)$
   at  $p\in X$ and some $r_1,\dots,r_m\in \R_{>0}$ that are less than 1,
   if $f\in \mathscr{O}_{X,p}\setminus \{0\}$ is of the form  
  \[
  f=\sum_{I\in \Z_{\geq 0}^m} a_If^I \in \mathscr{O}_{X,p}
  \subset \kappa(p) [[f_1,\dots,f_m]],\]
  then it holds that
  \[
  |f|_x=\max_{a_I\neq 0}|r^I|,
  \]
  where we regard $\kappa(p)$ as a subring of $\widehat{\mathscr{O}_{X,p}}$ by fixing an injection $\kappa(p)\hookrightarrow \widehat{\mathscr{O}_{X,p}}$.

  Set
  $A:=\kappa(p)[f_1,\dots,f_m]\subset \widehat{\mathscr{O}_{X,p}}$, $Y:={\rm Spec}A$
  and
  $y:=x|_A\in Y^{\rm val}$.
Then $A\subset \widehat{\mathscr{O}_{X,p}}$ is dense with respect to the valuation $x$.
By a similar argument to the latter half of the proof of Proposition \ref{Hx}, it follows that ${\rm Frac} (A)\subset {\rm Frac}(\widehat{\mathscr{O}_{X,p}})$ is dense  with respect to the valuation $x$.
Here, note that $\widehat{\mathscr{O}_{X,p}}$ is isomorphic to the completion of $\mathscr{O}_{X,p}$ with respect to  $x$.
Indeed, 
set $r:=\min_{i}r_i$,  $R:=\max_{i}r_i$
and the maximal ideal $\mathfrak{m}_p:=(f_1,\dots,f_m)$ of $\widehat{\mathscr{O}_{X,p}}$.
Then, since $0<r\leq R<1$, the inducing topology on $\mathscr{O}_{X,p}$
by $x$ is equivalent to the $\mathfrak{m}_p$-adic topology on $\mathscr{O}_{X,p}$.
It gives an injection $\widehat{\mathscr{O}_{X,p}} \hookrightarrow \scrH(x)$.
Since $\scrH(x)$ is a field, this injection factors through an injection ${\rm Frac}(\widehat{\mathscr{O}_{X,p}})\hookrightarrow\scrH(x)$.
Since $\scrH(x)\cong \widehat{K(X)}$, the image of ${\rm Frac}(\widehat{\mathscr{O}_{X,p}})$ by the injection  is dense.
Moreover, the image of ${\rm Frac}(A)$ by the injection is also dense.
Hence, $\scrH(y)=\scrH(x)$ holds.
  In particular, we obtain 
  $\wt {\scrH(y)}=\wt {\scrH(x)}$.
  Now $A$ is isomorphic to the polynomial ring over $\kappa(p)$.
  That is, $A\cong \kappa(p)[t_1,\dots,t_m]$.
  Then, we can regard $y$ as a monomial valuation on $A$
  in the sense of Definition \ref{m1}.
  Hence we can apply Lemma \ref{mon} to $A$.
  It implies that $\widetilde{\mathscr{H}(x)}$ is the rational function field
  in  $m-r$ variables over $\kappa(c_{Y}(y))$.
    In particular, $\wt{\scrH(y)}$ is finitely generated over $\kappa(c_{Y}(y))$
   as a field.
  By definition of $A$, it holds that $\kappa (c_Y(y)) = \kappa (p) =\kappa (c_X(x))$ as a subring of $\scrH(x)$.
  Hence,  $\kappa (c_Y(y)) =\kappa (c_X(x))$ also holds as a subring of $\wt{\scrH(x)}$.
It means that
     $\wt {\scrH(x)}$ is finitely generated over $\kappa(c_{X}(x))$
    as a field.
Finally, by Theorem \ref{equal}, we can construct a blow-up   $\pi : X'\to X$ such that
  \[\wt {\scrH(x)} =\kappa(c_{X'}(x)).\]
\end{proof}

\begin{Rem}\label{abh}
  In the above situation, we can see that
  a quasi monomial valuation $x\in X^{\rm val}$ is an Abhyankar valuation. Indeed,
we can reduce it to the case of monomial valuations since
   the equality ${\rm tr deg}_k ({\rm Frac} (A)) = \dim X$ holds and $y=x|_A$ is a monomial valuation on $A$.
  Actually, any Abhyankar valuation that admits the center on $X$ is a  quasi monomial valuation
  when the characteristic $k$ is $0$ (cf.  \cite[Theorem 1.1]{ASENS_2005_4_38_6_833_0},   \cite[Proposition 2.8]{10.2307/25099178}).
\end{Rem}

\begin{Cor}\label{rat}
In the same situation as Theorem \ref{smon},  suppose that $p$ is a
nonsingular closed point
whose residue field is $k$ and $x\in X^{\rm val}$ is a monomial valuation on the nonsingular point $p$.
Then   $\widetilde{\mathscr{H}(x)}$ is the rational function field over $k$.
\end{Cor}
\begin{proof}
Keep the notation introduced in the proof of Theorem \ref{smon}.
Since $\kappa(c_Y(y))$ is a $k$-algebra and $p\in X$ is a closed point,
the equality $\kappa(c_Y(y))=\kappa(c_X(x))=k$ follows from the discussion of Theorem \ref{smon}.
As we saw in the proof of Theorem \ref{smon}, $\widetilde{\mathscr{H}(x)}$ is the rational function field over $\kappa(c_{Y}(y))$.
Hence, the assertion holds.
\end{proof}
Theorem \ref{smon} shows that quasi monomial valuations are in $X^*$ (See Definition \ref{class}).
However, in general, there exists $x\in X^*$ such that $x$ is not a quasi monomial valuation.
\begin{Ex}[cf. {\cite[Example 1.7]{STEV}}]
Let $k$ be a trivially valued field, $k[X,Y]$ be the polynomial ring in $X,Y$ over $k$ and $k[[X]]$ be the
ring of formal power series in $X$ over $k$.
  Consider the homomorphism $k[X, Y]\hookrightarrow k[[X]]$ defined by sending $Y$ to some transcendental element over $k(X)$.
It induces an injection $k(X, Y)\hookrightarrow k((X))$.
Then, we denote by $x\in (\mathbb{A}_k^2)^{\rm val}$
 the pull back of the non-trivial discrete valuation on $k((X))$.
By definition, for any $f\in k[X,Y]$, it holds that $|f|_x\leq 1$.
It implies the existense of the center of $x$ in $\mathbb{A}_k^2$.
In particular, $B(x,\mathbb{A}_k^2)$ is non-empty.
For this $x$, it holds that $\wt{\scrH(x)}=k$ and ${\rm rank} ({\rm Im} |\ |_x)=1$.
In particular, $x\in (\mathbb{A}_k^2)^*$ follows from Theorem \ref{equal}.
On the other hand, if $x$ is a quasi monomial valuation, then $x$ is
an Abhyankar valuation. 
However,
 the Abhyankar equality ${\rm tr.deg}_k
\wt{\scrH(x)} =2- {\rm rank} ({\rm Im} |\ |_x)$ does not hold for this $x$.
Hence, $x$ is neither  an Abhyankar valuation nor a quasi monomial valuation.
\end{Ex}

\section{$\wt{\scrH (x)}$ for finite group action}
In this section, we consider a relation between the double residue field and finite group actions.

\begin{Def}
Let $X$ be a variety over a field $k$
and $G$ be a finite group.
We say that \emph{$G$ acts on $X$} if we fix a group homomorphism $G\to{\rm Aut}(X)$.
\end{Def}
For brevity, we identify $\sigma \in G$ with its image by the above homomorphism.
\begin{Def}
Let $X$ be a variety over a non-Archimedean field $k$ and $G$ be a finite group acting on $X$.
 Take a point $x\in X^{\rm val}$.
Then $x$ is \emph{$G$-invariant} if it satisfies the following.
\[
|f|_x=|\sigma^\sharp(f)|_x\ \ \ \ \  {}^{\forall}f\in K(X),{}^{\forall}\sigma\in G,
\]
where the ring isomorphism $\sigma^\sharp : K(X)\to K(X)$ is induced by the morphism $\sigma : X\to X$.
It means that $x=\sigma^{\rm an}(x)$ holds for all $\sigma \in G$, where
 the morphism $\sigma^{\rm an} : X^{\rm an} \to X^{\rm an}$ is induced by the morphism
  $\sigma : X\to X$ as in Fact \ref{un}.
\end{Def}
If $x\in X^{\rm val}$ is $G$-invariant, then
 $G$ acts on $\mathscr{H}(x)$ and $\widetilde{\mathscr{H}(x)}$.
 Indeed, for each $\sigma\in G$,
 $\sigma$ induces an isomorphism $$\sigma^\sharp: (K(X),|\cdot |_{\sigma^{\rm an}(x)})\to  (K(X),|\cdot |_x)$$
 between normed rings.
Since $x=\sigma^{\rm an}(x)$, it induces an automorphism  of $\mathscr{H}(x)$ as a complete valuation field.
In this way, $G$ acts on $\mathscr{H}(x)$.
Further, this action descends to $\widetilde{\mathscr{H}(x)}$ since the induced automorphism preserves the valuation $|\cdot |_x$ on $\mathscr{H}(x)$.

  Let $X$ be a variety over a trivially valued field $k$,
   $G$ be a finite  group acting on $X$ and
 $x\in X^{\rm val}$ be a $G$-invariant valuation that has the center  $p\in X$.

\begin{Lem}
  In the above situation,
   $p\in X$ is a fixed point of $G$.

\end{Lem}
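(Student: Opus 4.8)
The plan is to reduce the statement to the functoriality of the center established at the end of Section~2.3, using the reformulation of $G$-invariance in terms of the analytified automorphisms. Fix $\sigma\in G$. By definition, $x$ being $G$-invariant means exactly that $|\sigma^\sharp(f)|_x=|f|_x$ for all $f\in K(X)$, equivalently $\sigma^{\mathrm{an}}(x)=x$ in $X^{\mathrm{an}}$. Since $k=k^\circ$ (the norm on $k$ is trivial), $X$ is its own unique model, and $\sigma\colon X\to X$, being an isomorphism, is in particular a proper birational morphism from the model $X$ to the model $X$. Under the birational identification of $X^{\mathrm{val}}$ with itself induced by $\sigma$, the valuation $x$ is carried to $\sigma^{\mathrm{an}}(x)=x$; so $\sigma$ identifies $x$ with $x$ itself.

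Now apply the compatibility $f(c_{\mathcal Y}(x))=c_{\mathcal X}(x)$ from Section~2.3 with $f=\sigma$ and $\mathcal{X}=\mathcal{Y}=X$ (the existence of $c_X(x)=p$ being part of the hypothesis, and the existence of the center of the identified valuation following from it by the valuative criterion of properness as in the cited discussion). This yields $\sigma(c_X(x))=c_X(x)$, that is, $\sigma(p)=p$. As $\sigma\in G$ was arbitrary, $p$ is a fixed point of $G$.

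Alternatively, one can argue directly in an affine chart: pick an affine open $U={\rm Spec}\,A$ with $p\in U$, so that $A\subseteq R_x$ and $p=c_X(x)=\{a\in A\mid |a|_x<1\}$. The automorphism $\sigma$ induces a ring isomorphism $\sigma^\sharp$ on (the relevant localizations of) the coordinate rings that preserves $|\cdot|_x$ by $G$-invariance; tracing which prime ideal $\sigma$ sends $p$ to and substituting $|\sigma^\sharp(a)|_x=|a|_x$, one finds the image is again $\{f\mid |f|_x<1\}=c_X(x)=p$. The only point that requires a little attention — and it is routine rather than a genuine obstacle — is the bookkeeping of affine charts and the direction of the birational identification of $x$ with a valuation on the source copy of $X$ under $\sigma$; the reformulation $\sigma^{\mathrm{an}}(x)=x$ of $G$-invariance is precisely what makes that identified valuation coincide with $x$, so that the functoriality of the center applies verbatim.
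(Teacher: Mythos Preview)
Your proposal is correct and follows essentially the same line as the paper's own proof. Both arguments rest on the identity $\sigma^{\mathrm{an}}(x)=x$ together with the valuative-criterion compatibility between $\sigma$ and the center; you package this by invoking the functoriality $f(c_{\mathcal Y}(x))=c_{\mathcal X}(x)$ already recorded at the end of \S2.3 (with $f=\sigma$ an isomorphism, hence proper birational), whereas the paper unwinds that same diagram explicitly and concludes via the valuative criterion of separatedness that the two maps ${\rm Spec}\,R_x\to X$ agree, whence $\sigma(p)=p$. Your remark about the direction of the identification is well placed: strictly speaking the birational transport of $x$ along $\sigma$ yields $(\sigma^{\mathrm{an}})^{-1}(x)$ rather than $\sigma^{\mathrm{an}}(x)$, but $G$-invariance makes both equal to $x$, so nothing is lost.
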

\begin{proof}
Consider a morphism $\iota : {\rm Spec}K(X) \to X$
  induced by the identity map ${\rm id} : \mathscr{O}_{X,\eta}=K(X) \to K(X)$,
  where $\eta$ is the generic point of $X$.
Set $R_y:=\scrH(y)^\circ \cap K(X)$ for any $y\in X^{\rm val}$.
Since the center of $x$ is $p\in X$, there exists 
$\varphi: {\rm Spec} R_x \to X$ such that $\varphi$ makes the following diagram commmutative.

\[
  \xymatrix{
    {\rm Spec} K(X) \ar[r]^-{\iota} \ar[d]
    %^{\varphi}
    %\ar@/^18pt/[rr]
    & X \ar[d] \\
    {\rm Spec} R_{x} \ar[ru]^\varphi \ar[r] & {\rm Spec}k
  }
\]
Here, the center
$p$ is given as the image of the closed point  of ${\rm Spec} R_x$ through $\varphi$.
Now we take $\sigma\in G$. Then $\sigma$ gives a $k$-isomorphism $\sigma:X\to X$.
Since
$x=\sigma^{\rm an}(x),$
it holds that $R_{\sigma^{\rm an}(x)}=R_x$  and $\sigma$ induces an automorphiosm
$ \sigma: {\rm Spec} R_x\to  {\rm Spec} R_x(={\rm Spec} R_{\sigma^{\rm an}(x)} )$. Note that
$\sigma$ sends the closed point of ${\rm Spec} R_x$ to itself.
Moreover, the following diagram is commutative.
\[
  \xymatrix{
    {\rm Spec} R_x \ar[r]^-{\varphi} \ar[d]_-{\sigma}
    %^{\varphi}
    %\ar@/^18pt/[rr]
    & X \ar[d]^-{\sigma} \\
    {\rm Spec} R_{x}  \ar[r]^-\varphi & X
  }
\]
Hence, $p=\sigma (p)$ holds.
\end{proof}

\begin{Lem} \label{qnbd}
In the same situation as above, we can take some affine open set  $U\subset X$
such that $U$ is stable under the group actions of $G$ and $U$
contains $p\in X$ as a
fixed point of $G$.

\end{Lem}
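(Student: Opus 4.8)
The plan is to produce the required open set by averaging an arbitrary affine neighborhood over the group, using that $X$ is separated. Since $X$ is a scheme, I can first choose an affine open $V \subset X$ with $p \in V$. By the previous lemma $p$ is a fixed point of $G$, so for each $\sigma \in G$ the open set $\sigma(V)$ is again affine (it is the image of the affine $V$ under the automorphism $\sigma$) and it contains $\sigma(p) = p$.

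Next I would set $U := \bigcap_{\sigma \in G} \sigma(V)$, an open neighborhood of $p$. It is $G$-stable: for any $\tau \in G$ one has $\tau(U) = \bigcap_{\sigma \in G}(\tau\sigma)(V) = \bigcap_{\sigma' \in G}\sigma'(V) = U$, since $\sigma \mapsto \tau\sigma$ is a bijection of $G$. Finally I would check that $U$ is affine: $G$ is finite, so $U$ is a finite intersection of affine open subsets of $X$, and since $X$ is separated the intersection of two affine opens is affine (it is the preimage of the diagonal, a closed immersion, inside the product of the two affine opens); an induction on the number of factors then shows $U$ is affine. This yields the desired $G$-stable affine open set $U$ containing $p$ as a fixed point of $G$.

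I do not expect a genuine obstacle here; the argument is short. The only step using a nontrivial hypothesis is the last one, where separatedness of the variety is precisely what guarantees that $\bigcap_{\sigma \in G}\sigma(V)$ is affine rather than merely a union of affines glued along opens — this is the classical fact that a finite intersection of affine opens in a separated scheme is affine. (Equivalently, for quasi-projective $X$ one could invoke the standard result that any finite set of points, a fortiori any orbit, lies in an affine open, but the separated-scheme argument above needs no projectivity assumption.)
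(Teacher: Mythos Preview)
Your proof is correct and essentially identical to the paper's: the paper also intersects the $G$-translates of an affine neighborhood of $p$ (written there as $\bigcap_{\sigma\in G}\sigma^{-1}(V)$, which of course equals your $\bigcap_{\sigma\in G}\sigma(V)$ since $G$ is a group) and invokes separatedness of $X$ and finiteness of $G$ to conclude affineness. The only cosmetic difference is that the paper phrases the affineness of each $\sigma^{-1}(V)$ via ``$\sigma$ is an affine morphism'' rather than ``$\sigma$ is an automorphism,'' and leaves the $G$-stability verification implicit.
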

\begin{proof}
Any $\sigma\in G$ corresponds with the $k$-isomorphism $\sigma:X\to X$.
Now we take some open affine neighborhood  $V\subset X$
of $p\in X$.
Then we set \[
U:=\bigcap_{\sigma\in G} \sigma^{-1}(V).
\]
Since $\sigma$ is an isomorphism, $\sigma^{-1}(V)$ is affine.
Moreover $p\in \sigma^{-1}(V)$ holds since $p$ is a fixed point by the above lemma.
Since $X$ is separated and $G$ is finite, 
$U$ is an affine neighborhood
of $p\in X$.
By the definition of $U$, it holds that  $U$ is stable under $G$.
Hence the assertion follows.
\end{proof}

This lemma implies that $G$ acts on the affine variety $U$.
%Here, we assume that the order of $G$ is relatively prime to the characteristic of $k$.
Then
we can take the geometric quotient $\phi : U\to U/G$ for this
affine neighborhood $U$ of $p\in X$.
Let $A$ be a ring such that $U={\rm Spec}A$. 
Then the above morphism is given by the
inclusion $A^G\hookrightarrow A$, where $A^G$ is the invariant ring of $G$-actions on $A$.
%The above assumption guarantees that $A^G$ is finitely generated over $k$ (cf. \cite[Proposition A.6]{BAG1}).
Since $A$ is integral over $A^G$, $A^G$ is finitely generated over $k$ by Artin-Tate lemma.
In particular, 
$U/G$ is an affine variety.

\begin{Prop}\label{qfcn}
  Suppose that a finite group $G$ acts on an affine variety $X$ over a field $k$.
  %and the order of $G$ is relatively prime to the characteristic of $k$.
  Then, it holds that
  \[K(X)^G=K(X/G).\]
\end{Prop}
\begin{proof}
%First of all, $X/G$ is an affine variety over $k$ thanks to the assumption.
It suffices to show that $K(X)^G\subset K(X/G)$ since $K(X/G)\subset K(X)^G$ is trivial.
Set $X={\rm Spec}A$ and
take $f,g\in A$.
Here, assume that $g\neq 0$ and $f/g\in K(X)^G$.
For $f/g\in K(X)^G$, we consider
\[ h:=\prod_{\sigma\in G}\sigma(g).\]
Then it holds that $h\in A^G$ and $f/g\cdot h \in A^G$.
It implies that $$f/g=(f/g\cdot h)/h \in K(X/G).$$
Hence, the assertion holds.
\end{proof}

\begin{Thm}\label{quot}
  Let $X$ be a variety over  a trivially valued field $k$
  and $G$ be a finite group acting on $X$ whose order $|G|$ is relatively prime to the characteristic of $k$.
  Take $x\in X^*$(see Definition \ref{class}) and
assume that $x\in X^*$ is a $G$-invariant valuation.
Further, take $U$ as Lemma \ref{qnbd} and
let $\phi : U\to U/G$ be the geometric quotient.
 Denote by  $p\in X$  the center of $x$ and set
 $\phi ^{\rm an}(x)=y\in (U/G)^{\rm an}$.
   Then,
it follows that 
   $\widetilde{\mathscr{H}(x)}^G\cong \widetilde{\mathscr{H}(y)}.$
\end{Thm}

\begin{proof}
  In short, this proof is obtained by refining the proof of
Theorem \ref{union}.

By Lemma \ref{qnbd}, for this
 open affine neighborhood  $ U\subset X$
of $p\in X$, it holds that $U$ is stable under $G$.
Let $A$ be a ring such that $U={\rm Spec} A$.
Since $x\in X^*$,
it follows from Theorem \ref{equal} that
  \[\wt {\scrH(x)} =\kappa(c_{X}(x))(s_1,\dots ,s_r)\
  \text{ for \ some}\ r\in \Z_{>0} \text{ and } s_1,\dots,s_r\in\wt {\scrH(x)}.
  \]
  In a similar way as Theorem \ref{union}, for any $i\in \{1,\dots, r\}$, we set $s_i = \ol{f_i/g_i}$,
  where $f_i, g_i\in A$ with $|f_i|_x\leq |g_i|_x$ and $g_i\neq 0$.
  Now we may assume that $|f_i|_x=|g_i|_x=:r_i<1$.
  Indeed, if $r_i=1$, then 
$\ol{g_i}\neq 0$ in $A/c_X(x)$.
  Hence,
  $\ol{f_i/g_i}=\ol{f_i}/\ol{g_i}\in \fr (A/c_X(x)) =\kappa(c_{X}(x))$
  holds.
  Note that $r_i>1$ never holds since the center of $x$ is contained in $U={\rm Spec} A$.
  Let $n\in \Z_{>0}$ be the order of $G$.
Since $U$ is stable under $G$,   the action of $G$ on $X$ induces an action of $A$. That is, each $\sigma \in G$ gives
an
isomorphism $\sigma : A\to A$.

Now,
  we set
   $I_1=(f_1,g_1)\subset A$, which is an ideal of $A$.
For this $I_1$, we define $J_1$ as follows.
\[
J_1:=GI_1\cap B_{1} \subset A,
\]
where
\[GI_1:=\sum_{\sigma \in G} \sigma (I_1),\
B_{1}:=\{f\in A\ |\ |f|_x\leq r_1^n\}.
\]
Since $GI_1$ is a sum of ideals of $A$, $GI_1$ is an ideal of $A$.
Since $x$ is non-Archimedean and $A\subset R_x$,
 $B_1$ is also an ideal of $A$.
Therefore $J_1$ is also an ideal of $A$.
Since $x$ is $G$-invariant, $G$ acts on $J_1$.
Take $m_1\in \Z_{>0}$ and $a_{1 j}\in A$ for each $j\in\{1,\dots, m_1\}$ such that  $J_1=(a_{11},\dots,a_{1m_1})\subset A$.
Set
\[
h_1:=\prod _{\sigma\in G}\sigma (g_1) \in A.
\]
Then $h_1/g_1\in A$ holds.
In a similar way to Theorem \ref{union},
there is an ideal sheaf $\mathscr{J}_1$ on $X$ such that
$\wt{J_1}=\mathscr{J}_1|_U$.
   Now we consider the blow-up $\pi_1$ of $X$ along $\mathscr{J}_1$. 
   Set $X_1:={\mathscr Bl}_{\mathscr{J}_1}X$.
   Then we have
   $\pi_1:X_1\to X.$
Here,   $X_1$ has an open affine scheme $U_1={\rm Spec}A_1$,
   where
   \[
A_1:=A\left[\frac{a_{11}}{h_1},\dots,\frac{a_{1m_1}}{h_1}\right] .
   \]
Indeed, we can take this affine open set $U_1$ as follows:
We obtain the following diagram by the property of blow-up.
\[
  \xymatrix{
    {\mathscr Bl}_{\wt{J_1}}U \ar@{^{(}->}[r]_{\rm open} \ar[d]_{\pi_1} & X_1  \ar[d]^{\pi_1} \\
    U \ar@{^{(}->}[r]^{\rm open} & X
  }
\]
Hence,
$
U_1 := D_+(h_1) \subset {\mathscr Bl}_{\wt{J_1}}U
$
is also an open affine subscheme of $X_1$ that is of the form ${\rm Spec}A_1$.
Then, it holds that $A_1\subset R_x$, $G$ acts on $A_1$ and
\[
\frac{f_1}{g_1}=\frac{f_1\cdot (h_1/g_1)}{h_1}\in A_1.
\]
Indeed,   $f_1\cdot (h_1/g_1)\in J_1$ holds since $|f_1\cdot (h_1/g_1)|_x=|h_1|_x= r_1^n$.

In the same way,
we set
$$I_2:=(f_2,g_2)A_1, \
B_2:=\{f\in A_1 \ |\ |f|_x\leq r_2^n\},$$
$$
J_2:=GI_2\cap B_2 =(a_{21},\dots ,a_{2m_2}),\
h_2:=\prod_{\sigma \in G}\sigma (g_2) \in A \subset A_1
$$
for some $m_2\in \Z_{>0}$.
Moreover, take   an ideal sheaf $\mathscr{J}_2$ on $X_1$ such that
$\wt{J_2 }=\mathscr{J}_2 |_{U_1}$, and consider the following blow-up in the same way.
\[
\pi_2: X_2={\mathscr Bl}_{\mathscr{J}_2}X_1\to X_1.
\]
Then we can take an open affine subscheme $U_2={\rm Spec} A_2$ of $X_2$, where
\[
A_2:=A_1\left[\frac{a_{21}}{h_2},\dots,\frac{a_{2m_2}}{h_2}\right] .
\]
It implies that  $A_2\subset R_x$, $G$ acts on $A_2$ and
\[
\frac{f_2}{g_2}=\frac{f_2\cdot (h_2/g_2)}{h_2}\in A_2.
\]

Inductively,
we can construct the blow-up $\pi=\pi_r\circ \cdots \circ \pi_1 : X_r \to X$
and
take an open affine subscheme $U_r={\rm Spec} A_r$ of $X_r$, where
$$
A_r:=A_{r-1}\left[\frac{a_{r1}}{h_r},\dots,\frac{a_{rm_r}}{h_r}\right] .
$$
Then it holds that  $A_r \subset R_x$, $G$ acts on $A_r$ and
$f_i/g_i\in A_r$ for all $i$.

Now we can write $c_{X_r}(x)$ as $\mathfrak{m}_x\cap A_r\in U_r$. Here, recall that $\mathfrak{m}_x=\{f\in \kappa(\pi_X(x))\ | \ |f|_x<1\}$.
Hence,
$
\fr (A_r/ \mathfrak{m}_x\cap A_r) = \kappa (c_{X_r}(x)).
$
Since $\kappa (c_X(x))\subset \kappa (c_{X_r}(x))$
and $f_i/g_i\in \kappa (c_{X_r}(x))$ for each $i$,
it holds that \[
\kappa (c_{X_r}(x)) = \wt{\scrH(x)}.
\]
By Proposition \ref{qfcn}, it follows that
$
\fr (A)^G =\fr(A^G).
$
Hence it follows that $A_r^G\subset \fr (A)^G =\fr(A^G)$.
Further,
$A_r^G\subset R_y$ follows from $A_r^G\subset R_x$.
Now, we define $c_{X_r}^G(y)$ as the center of $y$ in $U_r/G$.
Then we obtain the following diagram.
\[
  \xymatrix{
    A_r^G \ar@{^{(}->}[r] \ar[d] & \scrH(y)^\circ \ar[d] \\
    \fr(A_r^G/c_{X_r}^G(y))\ar@{^{(}->}[r] & \wt{\scrH(y)}
  }
\]
Since $\scrH(y) \subset \scrH(x)$ and $\scrH(y)$ is stable under the action of $G$ on $\scrH(x)$,
it holds that $\wt{\scrH(y)}\subset \wt{\scrH(x)}^G$.
Hence it suffices to show $\wt{\scrH(x)}^G\subset\wt{\scrH(y)}$.
%By the assumption of $G$,
Since $|G|$ is relatively prime to the characteristic of $k$ by assumption,
it is well-known that a functor taking $G$-invariants is exact (cf. \cite[Corollary 4.4]{Web16}).
Hence,
$A_r^G/c_{X_r}^G(y)\cong (A_r/c_{X_r}(x))^G
$ holds.
Set $B:=A_r/c_{X_r}(x)$.
By the above diagram, $\fr(B^G)\subset \wt{\scrH(y)}$ holds.
Further,  $\fr (B)^G =\fr(B^G)$ follows from Proposition \ref{qfcn}.
Since $
\fr (B)=\wt{\scrH(x)}$, it holds that
$\wt{\scrH(x)}^G=\fr(B^G)$.
Hence, it follows that
$
\wt{\scrH(x)}^G\subset\wt{\scrH(y)}
$. 
\end{proof}

A $G$-invariant quasi monomial valuation $x\in X^{\rm an}$  
 satisfies the above condition. Indeed, $x\in X^*$ holds by Theorem \ref{equal} and Theorem \ref{smon}.
In this situation, it is natural to ask what the point $y=\phi ^{\rm an}(x)\in (U/G)^{\rm val}$ is like.
By definition of the valuation $y$, the center of $y\in (U/G)^{\rm an}$ is $\phi(c_X(x))$.
Further, $\sqrt{|\mathscr{H}(x)^\times |}=\sqrt{|\mathscr{H}(y)^\times |}$ and
${\rm tr deg}_k  \widetilde{\mathscr{H}(x)}={\rm tr deg}_k  \widetilde{\mathscr{H}(y)}$ hold.
In particular, $\sqrt{|\mathscr{H}(x)^\times |}=\sqrt{|\mathscr{H}(y)^\times |}$ holds by the following discussion.
Since $x$ is non-Archimedean, $|\mathscr{H}(x)^\times |=|K(U)^\times |$ holds, where the right hand side is an image of $K(U)^\times$ by the valuation $|\cdot|_x:K(U)\to \R$.
In the same way, $|\mathscr{H}(y)^\times |=|K(U/G)^\times |$ holds, where the right hand side is an image of $K(U/G)^\times$ by the valuation $|\cdot|_y:K(U/G)\to \R$.
For any $f\in K(U),$ we set $g:=\prod_{\sigma \in G}\sigma(f)\in K(U)$. Then $g\in K(U/G)$ holds.
Since $x$ is $G$-invariant,
$|g|=|f|^{n}$ holds, where $n\in \Z_{>0}$ is the order of $G$.
Hence,
$\sqrt{|K(U)^\times |}=\sqrt{|K(U/G)^\times |}$ holds, so that $\sqrt{|\mathscr{H}(x)^\times |}=\sqrt{|\mathscr{H}(y)^\times |}$ also holds.
It implies that $y\in  (U/G)^{\rm an}$ is an Abhyankar valuation
that has the center on $X$. By Remark \ref{abh},
$y\in (U/G)^{\rm an}$ is a quasi monomial valuation if the characteristic of $k$ is $0$.

\section{$\wt{\scrH (x)}$ over CDVF}

In $\S 4,5$, we only considered the case when the base field is
 a trivially valued field.
In this section, we consider the case when the base field $K$ is a CDVF.
Let $R$ be the valuation ring of $K$ and
 $k$ be the residue field of $K$.
Assume that the characteristic of $k$ is 0.
Then, Cohen's structure theorem implies an isomorphism $R\cong k[[\varpi]]$, where $\varpi$ is an uniformizing parameter of $R$.
Through this section, we regard $K$ as a non-Archimedean field
 equipped with a valuation  uniquely determined by $|\varpi|=\exp (-1)$ and we set
 $S:={\rm Spec}R$.
 
We prepare the following terminology as \cite{BFJ}.
\begin{Def}
$\mathcal{X}$ is an \emph{$S$-variety}
if it is a flat integral $S$-scheme of finite type.
We denote by $\mathcal{X}_0$ its central fiber
and by $\mathcal{X}_K$ its generic fiber.
\end{Def}

\begin{Def}

Let $\mathcal{X}$ be an $S$-variety.
An ideal sheaf $\mathscr{I}$ on $\mathcal{X}$ is \emph{vertical} if
it is co-supported on the central fiber.
A \emph{vertical blow-up} $\mathcal{X}'\to \mathcal{X}$ is the normalized blow-up
along a vertical ideal sheaf.
\end{Def}
Given an $S$-variety $\mathcal{X}$, let $\{ E_i\}_{i\in I}$ be
 the finite set of all irreducible components of its central fiber $\mathcal{X}_0$.
 For each non-empty subset $J\subset I$, we set
 \[E_J:=\bigcap_{j\in J}E_j.
 \]
 Here, we endow each $E_J$ with the reduced scheme structure.

\begin{Def}
Let $\mathcal{X}$ be an $S$-variety.
$\mathcal{X}$ is \emph{SNC} if it satisfies the following.
\begin{enumerate}
  \item the central fiber $\mathcal{X}_0$ has simple normal crossing support,

  \item $E_J$ is irreducible (or empty) for each non-empty subset $J\subset I$.
\end{enumerate}

\end{Def}

Condition (1) is equivalent to that the following two conditions holds. First, $\mathcal{X}$
 is regular.
 Given a point $\xi\in \mathcal{X}_0$, let $I_\xi \subset I$ be the set of
  indices $i\in I$ for which $\xi \in E_i$, and we pick
  a local equation $z_i\in \mathscr{O}_{\mathcal{X},\xi}$
  of $E_i$ at $\xi$ for each $i\in I_\xi$.
  Then we also impose that $\{ z_i \ |\ i\in I_\xi\}$ can be completed to
   a regular system of parameters of $\mathscr{O}_{\mathcal{X},\xi}$.

Condition (2)
  is not imposed in the usual definition of a simple normal crossing
 divisor. However, it can always be achieved from (1) by further blow-up along components of the possibly non-connected
  $E_J$'s.

We denote by ${\rm Div}_0(\mathcal{X})$ \emph{the group of
vertical Cartier divisors} on $\mathcal{X}$.
When $\mathcal{X}$ is normal,
 ${\rm Div}_0(\mathcal{X})$ becomes a free $\Z$-module of finite rank.

\begin{Fact}[cf.  {\cite[Theorem 1.1]{TEMKIN2008488}}]\label{Tem}
For any $S$-variety $\mathcal{X}$ with smooth generic fiber,
there exists a vertical blow-up $\mathcal{X}'\to\mathcal{X}$ such that
$\mathcal{X}'$ is SNC.

\end{Fact}

\begin{Def}
Let $X$ be a smooth connected projective $K$-analytic space
in the sense of Berkovich.
An  $S$-variety $\mathcal{X}$ is a \emph{model} of $X$ if it is a normal and projective
  $S$-variety together with the datum of an isomorphism $\mathcal{X}_K^{\rm an}\cong X$.
\end{Def}

In the above setting, 
for some smooth projective $K$-variety $Y$,
we can identify $X$ with $Y^{\rm an}$ by \cite[Proposition 3.3.23]{Berk90}.
Moreover,
 there is a model of $X$.
Indeed, given an embedding of $Y$ into a suitable
projective space $\mathbb{P}_K^m$,
we can take $\mathcal{X}$ as the normalization of
the closure of $Y$ in $\mathbb{P}_S^m$.
Then $\mathcal{X}$ is a model of $X$.
Note that $\mathcal{X}$ is also a model of $\mathcal{X}_K$ in the sense of Definition \ref{model}.
%Here, we fix a universe $\mathscr{U}$ in the sense of Grothendieck containing some model of $X$ as in $\S$ 3.
Now we assume the existence of two nontrivial Grothendieck universes $\mathcal{U}$ and $\mathcal{V}$ such that $\mathcal{X}\in \mathcal{U}\in \mathcal{V}$.
We denote by $\mathcal{M}_X$ \emph{the class of $\mathcal{U}$-small models} of $X$, where 
the $\mathcal{U}$-smallness means being an element of $\mathcal{U}$.
By the above discussion, it follows that $\mathcal{M}_X$ is nonempty and $\mathcal{V}$-small, which means $\mathcal{M}_X\in \mathcal{V}$.
In addition, 
it follows from a similar discussion to the case of $B(X,x)$ that $\mathcal{M}_X$ becomes a
 directed set by declaring $\mathcal{X}'\geq \mathcal{X}$
if there exists a 
proper birational morphism
$\mathcal{X}'\to \mathcal{X}$ whose restriction to generic fibers is an isomorphism that is compatible with the structure of models.

For any model $\mathcal{X}$ of $X$ and any $x\in X$,
we can define the center $c_{\mathcal{X}}(x)$ of $x$ in the same way as before.
That is, we consider the following diagram.
 \[
   \xymatrix{
     {\rm Spec}\ \kappa (\pi_{\mathcal{X}_K} (x')) \ar[r] \ar[d] & \mathcal{X} \ar[d] \\
     {\rm Spec} R_{x'} \ar[r] \ar@{..>}[ru] & S}
 \]
 Here, 
 we use the same notation as in \S  \ref{center} and
 $x'$ is the image of $x$ in $\mathcal{X}_K^{\rm an}$.
Then $c_\mathcal{X}(x)\in \mathcal{X}$ is obtained by the image of the closed point
of ${\rm Spec}R_{x'}$.
Further, 
 since the center is given as the image of the closed point
of ${\rm Spec}R_{x'}$ and the above diagram commutes,
it holds that
$c_\mathcal{X}(x)\in \mathcal{X}_0$.

Let $\mathcal{X}$ be an SNC model of $X$.
We can write the central fiber as \[
\mathcal{X}_0=\sum_{i\in I} m_iE_i,
\]
where $(E_i)_{i\in I}$
are irreducible components.
Then, it follows that \[
{\rm Div}_0(\mathcal{X}) =\bigoplus_{i\in I}\Z E_i .
\]
Set $
{\rm Div}_0(\mathcal{X})^*_\R:={\rm Hom}( {\rm Div}_0(\mathcal{X})
,\Z )\otimes_\Z \R.$
Denote by $E_i^*$ the dual element of $E_i$ and  set
\[e_i :=\frac{1}{m_i}E_i^*.\]

For each $J\subset I$ such that $E_J\neq \emptyset$,
let $\hat{\sigma}_J \subset {\rm Div}_0(\mathcal{X})^*_\R$
be a simplicial cone defined by
\[\hat{\sigma}_J := \sum_{j\in J}\R_{\geq 0}e_j.
\]
Fix the basis of ${\rm Div}_0(\mathcal{X})^*_\R$ as above.
That is, $s=(s_j)\in \hat{\sigma}_J$ means $s=\sum s_j e_j$.
These cones naturally defines a fan $\hat{\Delta}_{\mathcal{X}}$
in ${\rm Div}_0(\mathcal{X})^*_\R$.

Define the \emph{dual complex} of $\mathcal{X}$ by
\[
\Delta_{\mathcal{X}}:= \hat{\Delta}_{\mathcal{X}}
\cap \{ \left< \mathcal{X}_0,\cdot\right> =1\} ,
\]
where $\left< \cdot,\cdot\right>$ is the natural bilinear
form on ${\rm Div}_0(\mathcal{X})^*_\R$.
Each $J\subset I$ such that $E_J\neq\emptyset$ corresponds to a simplicial face
\[
\sigma_J:= \hat{\sigma}_J \cap \{ \left< \mathcal{X}_0,\cdot\right> =1\}
= {\rm Conv}\{e_j\ |\ j\in J\}
\]
of dimension $|J|-1$ in $\Delta_{\mathcal{X}}$,
where ${\rm Conv}$ denotes the convex hull.
Then we can define a structure of
a simplicial complex on $\Delta_{\mathcal{X}}$ such that,
for two subsets $J,L$ of $I$, $\sigma_J$ is a face of $\sigma _L$ if and
only if $J\supset  L$.

We denote by $\mathcal{M}_X'$ \emph{the full subcategory consisting of SNC models} of $X$ in $\mathcal{M}_X$.
By Fact \ref{Tem}, $\mathcal{M}_X\neq\emptyset$ implies $\mathcal{M}_X'\neq \emptyset$.
%By considering  $\mathcal{M}_X'$ to be a subcategory of $\mathcal{M}_X$,
Besides, since $\mathcal{M}_X$ is directed,
$\mathcal{M}_X'$
becomes a directed set by Fact \ref{Tem}.
For two models
 $\mathcal{X}',\mathcal{X}\in \mathcal{M}_X'$, the binary relation
 $\mathcal{X}'\geq \mathcal{X}$ induces a natural map
$\Delta_{\mathcal{X}'}\to\Delta_{\mathcal{X}}$.
Hence, 
\[
\lim_{\substack{\longleftarrow \\ \mathcal{X}\in \mathcal{M}_X'}}\Delta_{\mathcal{X}}
\]
 is well-defined.
The following, which is a highly suggestive result,
 is stated in \cite{Kontsevich2006}.
Furthermore, the proof is written in \cite{BFJ}.

\begin{Fact}[{\cite[Corollary 3.2]{BFJ}}]\label{BFJ}
In the above situation, we obtain the following homeomorphism.
\[
X\cong
\lim_{\substack{\longleftarrow \\ \mathcal{X}\in \mathcal{M}_X'}}\Delta_{\mathcal{X}}
.
\]

\end{Fact}

 Now, for each non-empty subset $J\subset I$,
the intersection $E_J:=\cap_{j\in J}E_j$ is either empty
 or a smooth irreducible $k$-variety.
Let $\xi _J$ be a generic point of $E_J$ if $E_J\neq\emptyset$.
For each $j\in J$ we can choose a local equation
 $z_j\in \mathscr{O}_{\mathcal{X},\xi _J}$, such that $(z_j)_{j\in J}$
 is a regular system of parameters of $\mathscr{O}_{\mathcal{X},\xi _J}$
  because of the SNC condition.
 Since the valuation ring $R$ contains
 the residue field $k$, 
the completion  $\widehat{\mathscr{O}}_{\mathcal{X},\xi _J}$ of  $\mathscr{O}_{\mathcal{X},\xi _J}$ also contains the field $k$. Then we can also apply Cohen's structure theorem to   $\widehat{\mathscr{O}}_{\mathcal{X},\xi _J}$. Hence,
  after taking a field of representatives of $\kappa(\xi _J)$,
  we obtain that
  \[\iota : \widehat{\mathscr{O}}_{\mathcal{X},\xi _J}\cong \kappa(\xi _J)[[t_j,j\in J]]\]
defined by $\iota (z_j)= t_j$ for each $j\in J$.

\begin{Def}\label{qmo}
Under this setting,
 $x\in X$ is said to be a \emph{quasi monomial valuation} if there exist
  an SNC model $\mathcal{X}$ of $X$ and $s=(s_j)\in \sigma _J\subset \Delta_\mathcal{X}$
  such that $x$ is a valuation on $\mathscr{O}_{\mathcal{X},\xi _J}$ given as the restriction of the   following valuation $|\cdot |$ on $\widehat{\mathscr{O}}_{\mathcal{X},\xi _J}$.
  For any non-zero $f \in \widehat{\mathscr{O}}_{\mathcal{X},\xi _J}$ of the form
  $$f= \sum_{\alpha\in {\Z}_{\geq 0}^{|J|}}c_\alpha z^\alpha \in \widehat{\mathscr{O}}_{\mathcal{X},\xi _J}\setminus \{ 0\},$$
 the valuation $|\cdot |$ is defined by
$$|f|:=\max_{c_\alpha\neq 0}\exp(-s)^\alpha:=\max_{c_\alpha\neq 0} \left( \prod_{j\in J} \exp(-s_j)^{\alpha_j} \right),$$
where $\alpha=(\alpha_j)_{j\in J}$, $z^\alpha:=\prod_{j\in J} z_j^{\alpha_j}$ and
each $c_\alpha$ is either zero or
a unit of $\widehat{\mathscr{O}}_{\mathcal{X},\xi _J}$ such that $\iota (c_\alpha ) \in \kappa (\xi_J) $.
Besides, define $|0|:=0$.
\end{Def}
  \begin{Rem} \label{dif qmo}
We already defined quasi monomial valuation in Definition \ref{aq}.
The difference between the two definitions is that the above `quasi monomial
valuation' is not only a quasi monomial valuation in the sense of Definition \ref{aq},
but also an extension of the equipped valuation on $K$ and has a `good' center.

For the remainder of this paper, quasi monomial valuations will be in the sense of
Definition \ref{qmo}.
\end{Rem}

Our definition above is slightly a priori different from the original one
in
\cite{BFJ} though it is still equivalent.
Denote by $X^{\rm qm}$ \emph{the set of quasi monomial valuations of $X$}.

From now on, we list a few properties of quasi monomial valuations.

\begin{Fact}[{\cite[Corollary 3.9]{BFJ}}]
  $X^{\rm qm}$ is dense in $X$.
\end{Fact}
\begin{Fact}[cf. {\cite[Definition 3.7 and $\S 3.3$]{BFJ}}]
Denote by $\Delta_{\mathcal{X}}'$ the inverse image of $\Delta_{\mathcal{X}}$ through
the homeomorphism in Fact \ref{BFJ}.
Then, it holds that

\[
X^{\rm qm} = \bigcup_{\mathcal{X}\in \mathcal{M}_X'} \Delta_{\mathcal{X}}'.
\]

\end{Fact}
This is the definition of quasi monomial valuations in \cite{BFJ}.

\vspace{0.1in}

We prove the following property of quasi monomial valuations as an application of the discussion in Theorem \ref{smon}.

\begin{Thm}\label{quasi}
  Let $X$ be a smooth connected projective $K$-analytic space.
If $x$ is a quasi monomial valuation, then there exists an SNC model $\mathcal{X}$ of $X$ such that
 $\wt {\scrH(x)} =\kappa(c_{\mathcal{X}}(x))$.

\end{Thm}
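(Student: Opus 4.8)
The plan is to reduce Theorem~\ref{quasi} to a purely local computation of monomial type, and then invoke the machinery of \S4. Starting from the SNC model $\mathcal{X}$ and the stratum $E_J$ witnessing that $x$ is quasi monomial, with generic point $\xi_J=c_{\mathcal{X}}(x)$ and regular parameters $(z_j)_{j\in J}$, the key observation is that $\kappa(\pi_X(x))=K(X)=K(\mathcal{X}_K)$ and, by Lemma~\ref{res}, $\wt{\scrH(x)}=\wt{K(X)}$ computed as the residue field of the algebraic valuation field $K(X)$ with respect to $x$. The center $\xi_J$ lies on the special fibre, so it suffices to work in an affine chart ${\rm Spec}\,B\subset\mathcal{X}$ containing $\xi_J$; the inclusion $\kappa(c_{\mathcal{X}}(x))\subset\wt{\scrH(x)}$ is automatic, and we must produce a \emph{further vertical blow-up} $\mathcal{X}'\to\mathcal{X}$ (which by Theorem~\ref{Tem} can be taken SNC) on which equality holds.

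The core step is to identify $\wt{\scrH(x)}$ concretely, by mimicking the proof of Lemma~\ref{mon} in this mixed-characteristic setting. Using $\widehat{\mathscr{O}}_{\mathcal{X},\xi_J}\cong\kappa(\xi_J)[[t_j,j\in J]]$ with $t_j\mapsto z_j$, and using that $R\cong k[[\varpi]]\hookrightarrow\mathscr{O}_{\mathcal{X},\xi_J}$ so that $k$ is a field of representatives inside $\kappa(\xi_J)$, one sees that the monomial valuation on the $z_j$ behaves exactly like a monomial valuation on a polynomial ring $A:=\kappa(\xi_J)[z_j,j\in J]$ in $|J|$ variables with weights $r_j=\exp(-s_j)<1$ — \emph{but one of the monomials, say $\prod_j z_j^{m_j}$, equals $\varpi$ up to a unit}, which pins down one $\Q$-linear relation among the weights coming from the constraint $\langle\mathcal{X}_0,s\rangle=1$. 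Set $r:=\dim_\Q\sqrt{|\scrH(x)^\times|}$; then, exactly as in Lemma~\ref{mon}, $\wt{\scrH(x)}={\rm Frac}(\kappa(\xi_J)[\ker\varphi])$ where $\varphi:\Z^{|J|}\to|\scrH(x)^\times|$ records the weights, and $\ker\varphi\cong\Z^{|J|-r}$, so $\wt{\scrH(x)}$ is a rational function field of $|J|-r$ variables over $\kappa(\xi_J)=\kappa(c_{\mathcal{X}}(x))$. In particular it is finitely generated over $\kappa(c_{\mathcal{X}}(x))$.

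Once finite generation is in hand, Corollary~\ref{equal} (applied with the ambient model playing the role of $\mathcal{X}\in B(X,x)$, noting that $\mathcal{M}_X\subset B(X,x)$ since projective models always admit the center) shows that $x\in X^*$, i.e.\ there is a blow-up on which $\wt{\scrH(x)}=\kappa(c_{\bullet}(x))$. Concretely, as in the proof of Theorem~\ref{union} and Theorem~\ref{smon}, one writes each generator of $\wt{\scrH(x)}$ over $\kappa(c_{\mathcal{X}}(x))$ as $\ol{g/h}$ with $g,h$ in the affine coordinate ring $B$ and $|g|_x=|h|_x$, forms the ideals $(g,h)$ (here over a DVR it is automatic that $V$ of the corresponding ideal sheaf misses the generic fibre since the generators are vertical, so the blow-up stays a model), blows up successively, and lands in a chart where all the generators become regular functions whose images generate $\kappa(c_{\mathcal{X}'}(x))$. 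This produces a (possibly non-SNC) model; applying Theorem~\ref{Tem} once more gives a vertical blow-up to an SNC model $\mathcal{X}'$, and since the center only lifts (so the residue field only grows, while it is already all of $\wt{\scrH(x)}$), we get $\wt{\scrH(x)}=\kappa(c_{\mathcal{X}'}(x))$ with $\mathcal{X}'$ SNC, as required.

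\textbf{Expected main obstacle.} The delicate point is the bookkeeping forced by the special fibre: verifying that the monomial-valuation computation of Lemma~\ref{mon} goes through verbatim when one variable-combination is tied to the uniformizer $\varpi$, and checking that the blow-ups used to realize the generators of $\wt{\scrH(x)}$ are \emph{vertical} (so the resulting schemes remain $S$-models with the correct generic fibre $X$) rather than merely birational modifications of $\mathcal{X}$. One must also confirm that the final application of Theorem~\ref{Tem} does not disturb the equality $\wt{\scrH(x)}=\kappa(c_{\mathcal{X}'}(x))$ — this is where the monotonicity $\kappa(c_{\mathcal{X}}(x))\subset\kappa(c_{\mathcal{Y}}(x))$ under proper birational $\mathcal{Y}\to\mathcal{X}$, established in \S2.3, does the work.
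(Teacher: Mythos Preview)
Your approach is essentially the paper's: reduce to the monomial computation of \S4 to obtain finite generation of $\wt{\scrH(x)}$ over $\kappa(c_{\mathcal{X}}(x))$, then realize the generators by successive blow-ups as in Theorem~\ref{union}, and finally pass to an SNC model. The paper in fact simply cites Theorem~\ref{smon} for the finite-generation step (even though that theorem is stated over a trivially valued field), whereas you redo the Lemma~\ref{mon} computation directly in the present setting; your version is arguably cleaner here. You are also more explicit than the paper about the last step: once $\kappa(c_{\mathcal{X}''}(x))=\wt{\scrH(x)}$ on some model $\mathcal{X}''$, a further vertical blow-up to an SNC model via Theorem~\ref{Tem} can only enlarge the residue field of the center, so equality persists.

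The one genuine slip is your parenthetical claim that for the ideals $(g,h)$ ``it is automatic that $V$ of the corresponding ideal sheaf misses the generic fibre since the generators are vertical.'' Even if you choose $g,h$ as monomials in the local equations $z_j$ (so that $V(g,h)$ is supported on $\mathcal{X}_0$ \emph{near} $\xi_J$), the ideal sheaf you extend to all of $\mathcal{X}$ need not be co-supported on the special fibre. The paper handles this exactly as in the $k\neq k^\circ$ case of Theorem~\ref{union}: since $|g_i|_x=|h_i|_x<1$, one chooses $l$ with $|\varpi^l|_x\le|h_i|_x$ and blows up along $(\varpi^l,g_i,h_i)$ instead of $(g_i,h_i)$; the presence of $\varpi^l$ forces the ideal sheaf to contain the $l$-th power of the ideal of $\mathcal{X}_0$, hence to be vertical, while the chart ${\rm Spec}\,A[\varpi^l/h_i,\,g_i/h_i]$ still contains the new center (because $|\varpi^l/h_i|_x\le 1$). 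This is precisely the ``expected main obstacle'' you flagged, and the $\varpi^l$ trick is its resolution.
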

\begin{proof}

This proof is essentially given by Theorem \ref{union}.
  Since $x$ is a quasi monomial valuation, we can take an SNC model $\mathcal{X}$ such
  that $x$ gives a monomial
  valuation on $\widehat{\mathscr{O}}_{\mathcal{X},c_\mathcal{X}(x)}$ as above.
   Now we construct a desirable vertical blow-up
   $\pi : \mathcal{X}'\to \mathcal{X}$ such that
   $\wt {\scrH(x)}=\kappa(c_\mathcal{X'}(x))$
   by refining
the construction of Theorem \ref{smon}.
Here, it follows from Theorem \ref{smon}
that
\[\wt {\scrH(x)} =\kappa(c_{\mathcal{X}}(x))(f_1,\dots ,f_n)\ \
{\rm for \ some}\ n\in \Z_{>0}, f_1,\dots,f_n\in\wt {\scrH(x)}.
\]
We may assume that $f_i\neq 0$ for all $i=1,\dots, n$.
Set $f_i = \ol{g_i/h_i}$,
where $g_i,h_i\in \mathfrak{m}_{c_{\mathcal{X}}(x)} \subset
\mathscr{O}_{\mathcal{X},c_{\mathcal{X}}(x)}$ 
with $|g_i|_x\leq |h_i|_x$
and $h_i\neq 0$ in the same way as the discussion of
 Theorem \ref{quot}.
Here, $\mathfrak{m}_{c_{\mathcal{X}}(x)}$ denotes the maximal ideal of $\mathscr{O}_{\mathcal{X},c_{\mathcal{X}}(x)}$.
The uniformizing parameter $\varpi$ of $R$ gives 
a
local equation 
$\varpi \in \mathscr{O}_{\mathcal{X},c_{\mathcal{X}}(x)}$  of the central fiber $\mathcal{X}_0$ at $c_{\mathcal{X}}(x)$.
Since $|\varpi|_x<1$ and $|g_i|_x=|h_i|_x<1$, we see
$|\varpi^l|_x \leq |g_i|_x=|h_i|_x$ for some
$l \in \mathbb{N}$. 
By taking a sufficiently large $l$, we can choose $l$ to be independent of the choice of $i$.

Consider the blow-up along the closed subscheme $V(\varpi ^l, g_1,h_1)$ in some
neighborhood $U={\rm Spec}A$  of $c_{\mathcal{X}}(x)$.
Since $V(\varpi ^l, g_1,h_1)$ is also a closed subscheme of $l\mathcal{X}_0$,
the defining ideal sheaf of $V(\varpi ^l, g_1,h_1)$ extend to some defining ideal sheaf
on $\mathcal{X}$ that contains the defining ideal sheaf of $l\mathcal{X}_0$ on $\mathcal{X}$
 in the same way as Theorem \ref{union}.
Then the blow-up along this ideal sheaf can be regarded as an SNC model
after
taking a further blow-up by \cite[Theorem 1.1]{TEMKIN2008488}.
Denote by $\pi_1 : \mathcal{X}^1 \to \mathcal{X}$ this blow-up.
Then we see $f_1 \in \kappa(c_{\mathcal{X}^1}(x))$. Indeed,
we can take $U^1={\rm Spec} A^1$ as the affine neighborhood of $c_{\mathcal{X}^1}(x)$
such that $g_1/h_1 \in A^1$ since $\pi_1|_U$ factors through the blow up along $V(\varpi ^l, g_1,h_1)$ of $U={\rm Spec}A$.
In particular, $c_{\mathcal{X}}(x) \in U$ is lifted to $U^1$ through the following diagram.
\[
\xymatrix{
\pi_1^{-1}(U)\supset U^1 ={\rm Spec} A^1\ar[rr]^-{\pi_1}\ar[drr]_-{}& & U={\rm Spec}A\\
& &{\rm Spec}A[\varpi^l/h_1, g_1/h_1]\ar@{}[ul]|{\ \ \ \ \circlearrowright}\ar[u]&
}
\]
Hence, $f_1 \in \kappa(c_{\mathcal{X}^1}(x))$ holds.

In the same way as above, we construct the vertical blow-up
$$\pi_{i+1}: \mathcal{X}^{i+1}\to \mathcal{X}^i$$
with respect to $V(\varpi ^l, g_{i+1},h_{i+1})$ inductively.
Then, it follows that
\[
\kappa(c_{\mathcal{X}^n}(x)) = \wt {\scrH(x)}.
\]

  In this way, we construct desirable vertical blow-up
  \[
  \pi (=\pi_n\circ\cdots \circ \pi_1) : \mathcal{X}'(=\mathcal{X}^n )\to \mathcal{X}.
  \]
Then $\mathcal{X}'$ is an SNC-model of $X$.
Therefore,
the assertion follows.
\end{proof}

\bibliographystyle{amsalpha}

\begin{thebibliography}{Muk03}

\bibitem{ABH}
Abhyankar, S., On the valuations centered in a local domain, Amer. J. Math. 78 (1956), 321–348.

\bibitem{Berk90}
 Berkovich, V.~G., Spectral Theory and Analytic Geometry over
  Non-Archimedean Fields,  Math. Surveys Monogr., 33, Amer. Math. Soc, 1990.

\bibitem{Berk99}
\bysame, Smooth p-adic analytic spaces are locally contractible, Invent.
  Math. 137 (1999), 1--84.

\bibitem{BFJ}
Boucksom, S., Favre, C. and Jonsson, M., Singular semipositive metrics in
  non-archimedean geometry,  J. Algebraic Geom. 25 (2016), no. 1, 77–139.

\bibitem{BR10}
Baker, M. and Rumely, R., Potential theory and dynamics on the berkovich
  projective line,  Math. Surveys Monogr., 159, Amer. Math. Soc, 2010.

\bibitem{10.2307/25099178}
Ein, L., Lazarsfeld, R. and Smith, K.~E., Uniform
  approximation of abhyankar valuation ideals in smooth function fields, Amer. J. Math. 125 (2003), no.~2, 409--440.

\bibitem{H}
Hartshorne, R., Algebraic Geometry, Grad. Texts in Math.,
  vol.~52, Springer-Verlag, 1977.

\bibitem{jonsson2010valuations}
Jonsson, M. and Musta\c{t}\u{a}, M., Valuations and asymptotic invariants
  for sequences of ideals, Ann. Inst. Fourier (Grenoble) 62 (2012), no. 6, 2145–2209 (2013).

\bibitem{Mat-text}
Jonsson, M., Math731, topics in algebraic geometry: Berkovich spaces,
  (\TeX ed by Takumi Murayama)
  URL: \url{https://web.math.princeton.edu/~takumim/Berkovich.pdf} (2016).

\bibitem{ASENS_2005_4_38_6_833_0}
Knaf H. and Kuhlmann,  F.-V., Abhyankar places admit local uniformization
  in any characteristic, Ann. Sci. École Norm. Sup. (4) 38 (2005), no. 6, 833–846.

\bibitem{Kontsevich2006}
Kontsevich, M. and  Soibelman, Y. Affine structures and non-archimedean
  analytic spaces, The Unity of Mathematics: In Honor of the Ninetieth
  Birthday of I.M. Gelfand (2006), 321--385.

\bibitem{nagata1962}
Nagata, M., Imbedding of an abstract variety in a complete variety, 
J. Math. Kyoto Univ. 2 (1962), 1–10.
\bibitem{Ray72}
Raynaud, M., 
Géométrie analytique rigide d'après Tate, Kiehl,$\cdots$, 
Table Ronde d'Analyse Non Archimédienne (Paris, 1972), pp. 319–327.
Bull. Soc. Math. France, Mém. No. 39-40, Soc. Math. France, Paris, 1974.

\bibitem{STEV}
Stevenson, M., Abhyankar and quasi-monomial valuations,
  URL: \url{http://www-personal.umich.edu/~stevmatt/abhyankar.pdf} (2017).

\bibitem{TEMKIN2008488}
Temkin, M., Desingularization of quasi-excellent schemes in characteristic
  zero, Adv. Math. 219 (2008), no. 2, 488–522.

\bibitem{Web16}
Webb, P. An Introduction to the Cohomology of Groups,
  URL: \url{https://www-users.cse.umn.edu/~webb/oldteaching/Year15-16/cohomcontrol.pdf} (2016).
\bibitem{10.2307/1968864}
Zariski, O., Local uniformization on algebraic varieties,
Ann. of Math. (2) 41 (1940), 852–896.

\end{thebibliography}

\end{document}